\theoremstyle{theorem}
\newtheorem{theorem}{Theorem}[section]
\newtheorem{proposition}[theorem]{Proposition}
\theoremstyle{definition}
\newtheorem{definition}[theorem]{Definition}
\theoremstyle{remark}
\title{Restricted Color $n$-color Compositions}
\author{Brian Hopkins}
\address{Saint Peter's University}
\email{bhopkins@saintpeters.edu}
\author{Hua Wang}
\address{Georgia Southern University}
\email{hwang@georgiasouthern.edu}
\begin{document}

\begin{abstract}
Agarwal introduced $n$-color compositions in 2000 and most subsequent research has focused on restricting which parts are allowed.  Here we focus instead on restricting allowed colors.  After three general results, giving recurrence formulas for the cases of given allowed colors, given prohibited colors, and colors satisfying modular conditions, we consider several more specific conditions, establishing direct formulas and connections to other combinatorial objects.  Proofs are combinatorial, mostly using the notion of spotted tilings introduced by the first named author in 2012.
\end{abstract}

\maketitle

\section{Introduction}
A composition of a given positive integer $n$ is an ordered sequence of positive integers with sum $n$.  For instance, there are four compositions of 3, namely $(3)$, $(2,1)$, $(1,2)$, and $(1,1,1)$.  The summands are called parts of the composition.  Compositions are sometimes referred to as ordered partitions.

Agarwal \cite{a} introduced the concept of $n$-color compositions, where a part $\kappa$ has one of $\kappa$ possible colors, denoted by a subscript $1, \ldots, \kappa$.  There are eight $n$-colored compositions of 3, namely
$$(3_1), ( 3_2), (3_3), (2_1, 1_1), (2_2, 1_1), (1_1, 2_1), (1_1, 2_2), (1_1, 1_1, 1_1).$$
There has been much research on these objects, often with restrictions on part sizes \cite{an,cdw,g10,g12,h,s}.  Related colored compositions have arisen in many places, e.g., Andrews's $k$-compositions \cite{gea} where every part but the last has $k$ possible colors, \cite{ms} where odd parts have two possible colors, and \cite{bgw18} where a part $\kappa$ has $\binom{\kappa+d-1}{d}$ potential colors. 

The combinatorial tool called spotted tilings were introduced by the first named author in \cite{h}.  The tiling for a composition of $n$ is a $1 \times n$ rectangle broken into smaller rectangles (tiles) whose lengths correspond to the part sizes.  With colors, a part $\kappa_i$ corresponds to a $1 \times \kappa$ rectangle with a spot in square $i$.  Spotted tilings for the $n$-color compositions of 3 are shown in Figure 1.  Spotted tiling were subsequently used in \cite{cdw, ggw}.  Other combinatorial interpretations of $n$-colored compositions are based on binary sequences \cite{s}, lattice paths \cite{an}, and rooted trees \cite{sa}.
\begin{figure}[h]
\setlength{\unitlength}{.4cm}
\begin{picture}(26,5)%(-2,0.5)
\thicklines
\put(1,4){\line(1,0){3}}
\put(1,4){\line(0,1){1}}
\put(1,5){\line(1,0){3}}
\put(4,5){\line(0,-1){1}}
\put(1.5,4.5){\circle*{.5}}
\put(-1,4.3){$(3_1)$}

\put(1,2){\line(1,0){3}}
\put(1,2){\line(0,1){1}}
\put(1,3){\line(1,0){3}}
\put(4,3){\line(0,-1){1}}
\put(2.5,2.5){\circle*{.5}}
\put(-1,2.3){$(3_2)$}

\put(1,0){\line(1,0){3}}
\put(1,0){\line(0,1){1}}
\put(1,1){\line(1,0){3}}
\put(4,1){\line(0,-1){1}}
\put(3.5,0.5){\circle*{.5}}
\put(-1,0.3){$(3_3)$}

\put(8,4){\line(1,0){3}}
\put(8,4){\line(0,1){1}}
\put(8,5){\line(1,0){3}}
\put(10,4){\line(0,1){1}}
\put(11,5){\line(0,-1){1}}
\put(8.5,4.5){\circle*{.5}}
\put(10.5,4.5){\circle*{.5}}
\put(5,4.3){$(2_1, 1_1)$}

\put(8,2){\line(1,0){3}}
\put(8,2){\line(0,1){1}}
\put(8,3){\line(1,0){3}}
\put(10,2){\line(0,1){1}}
\put(11,3){\line(0,-1){1}}
\put(9.5,2.5){\circle*{.5}}
\put(10.5,2.5){\circle*{.5}}
\put(5,2.3){$(2_2, 1_1)$}

\put(15,4){\line(1,0){3}}
\put(15,4){\line(0,1){1}}
\put(15,5){\line(1,0){3}}
\put(16,4){\line(0,1){1}}
\put(18,5){\line(0,-1){1}}
\put(15.5,4.5){\circle*{.5}}
\put(16.5,4.5){\circle*{.5}}
\put(12,4.3){$(1_1, 2_1)$}

\put(15,2){\line(1,0){3}}
\put(15,2){\line(0,1){1}}
\put(15,3){\line(1,0){3}}
\put(16,2){\line(0,1){1}}
\put(18,3){\line(0,-1){1}}
\put(15.5,2.5){\circle*{.5}}
\put(17.5,2.5){\circle*{.5}}
\put(12,2.3){$(1_1, 2_2)$}

\put(19,4.3){$(1_1, 1_1, 1_1)$}
\put(23.25,4){\line(1,0){3}}
\put(23.25,4){\line(0,1){1}}
\put(23.25,5){\line(1,0){3}}
\put(24.25,4){\line(0,1){1}}
\put(25.25,4){\line(0,1){1}}
\put(26.25,5){\line(0,-1){1}}
\put(23.75,4.5){\circle*{.5}}
\put(24.75,4.5){\circle*{.5}}
\put(25.75,4.5){\circle*{.5}}

\end{picture}
\caption{The 8 spotted tilings for the $n$-color compositions of 3.}
\end{figure}
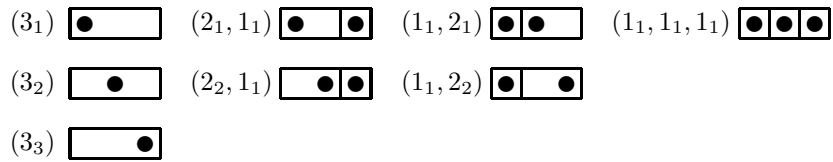

In this paper, we consider $n$-color compositions with restrictions on colors rather than part sizes.  Even and odd colors and some variations were considered in 2017 \cite{sa}, and certain of what we call modular color restrictions (see Theorem \ref{modular} below) were considered in 2019 \cite{s19}; the current work adds significantly to these explorations.  Section 2 gives general results on the enumeration of color restricted $n$-color compositions in terms of allowed or prohibited colors.  Section 3 treats many particular cases, adding combinatorial interpretations to several integer sequences and establishing various identities in terms of $n$-color compositions.  Proofs throughout are combinatorial, usually based on spotted tilings.

\section{Main Results}
We have the following very general results giving recurrence relations for the number of $n$-color compositions when a set of colors (finite or infinite) is allowed or prohibited.  There is also a result for colors satisfying certain modular conditions.
  
We follow the convention that there is an empty composition with sum zero, so that $a(0) = 1$ in most of our recursively defined sequences.  Also, in this section only, we use superscripts to index a list of numbers in order to avoid confusion with the subscripts denoting colors.  The sequence $c^1, c^2, \ldots$ is abbreviated $\{c^i\}$.  Let $c^- = \min(\{c^i\})$ and $c^+ = \max(\{c^i\})$.  We also use $A \setminus B$ for the complement of $B$ inside $A$. 

\subsection{Allowed Colors}
We begin with the situation where only specified colors are allowed.  

\begin{theorem}\label{allowed}
For allowed colors $\{c^i\}$, the number of color restricted $n$-color compositions of $n$ is given by
\[ a(n) = a(n-1) + \sum_i a(n-c^i) \]
with $a(n) = 0$ for $n \leq -1$, $a(0) = 1$, $a(1) = \cdots = a(c^- - 1) = 0$ (if $c^- \ne 1$), and $a(c^-) = 1$.
\end{theorem}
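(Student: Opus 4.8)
The plan is to realize each color-restricted $n$-color composition as a spotted tiling of a $1\times n$ rectangle in which every spot sits in a square whose position within its tile (numbered from that tile's left end) lies in $\{c^i\}$, and to argue entirely with these tilings; write $a(n)$ for the number of them. First I would read the boundary values directly off the model: the empty tiling gives $a(0)=1$, no tiling has negative length so $a(n)=0$ for $n\le-1$, and for $1\le n<c^-$ every tile has length at most $n<c^-$, which forces its spot into a position below $c^-$ and hence never an allowed color, so $a(n)=0$; for $n=c^-$ the only tiling with no shorter tile is the single tile of length $c^-$, whose spot is then forced to position $c^-$, giving $a(c^-)=1$. (When $c^-=1$ these degenerate cases collapse and one simply has $a(1)=1$.)

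For the recurrence, take $n\ge 2$ and classify a spotted tiling of length $n$ by its rightmost tile, say of length $\kappa$ with spot in position $c\in\{c^i\}$, $c\le\kappa$. Either $c=\kappa$ or $c<\kappa$. In the first case $\kappa=c^i$ for some $i$, and deleting the rightmost tile is a bijection from such tilings onto all spotted tilings of length $n-c^i$ (the empty one included when $n=c^i$), the inverse being to append a tile of length $c^i$ spotted at its last square; summing over $i$ contributes $\sum_i a(n-c^i)$, where the convention $a(m)=0$ for $m<0$ silently discards any $c^i>n$. In the second case $\kappa\ge 2$, and deleting the last square of the rightmost tile shrinks it to length $\kappa-1\ge c$, which leaves the spot at a still-allowed position and produces a spotted tiling of length $n-1$; since $n-1\ge 1$ that tiling is nonempty, so the inverse, lengthening its rightmost tile by one square on the right, is defined, giving a bijection and a contribution of $a(n-1)$. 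The two cases are disjoint and exhaustive, so $a(n)=a(n-1)+\sum_i a(n-c^i)$, and together with the boundary values (which already cover $n$ up through $c^-$) this determines the sequence.

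The point that needs care, and the reason the small values must be listed rather than running the recurrence from $n=1$, is exactly the inverse in the second case: lengthening the rightmost tile needs a nonempty tiling to act on, so it fails when $n-1=0$; equivalently, there is no length-$1$ tiling whose spot avoids the last square of its tile, whereas the $a(n-1)$ term would spuriously count the empty tiling at $n=1$. The remaining work is the routine verification that the listed initial conditions are consistent with, indeed forced by, the recurrence on the range $2\le n\le c^-$, in particular recovering $a(c^-)=1$ from $a(c^--1)=\dots=a(1)=0$ together with the single surviving term $a(0)$.
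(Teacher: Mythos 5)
Your proof is correct and is essentially the paper's argument run in reverse: the paper builds compositions of $n$ by lengthening the last tile of a composition of $n-1$ or appending a tile $c_c$ to a composition of $n-c^i$, which is exactly the inverse of your classification by whether the last square is spotted (remove the last tile) or empty (shrink it by one square). Your explicit handling of the initial values and of the failure of the $a(n-1)$ term at $n=1$ is a slightly more careful rendering of what the paper dismisses as ``clear,'' not a different method.
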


\begin{proof}
We build all allowed compositions of $n$ from the compositions counted by the summands in the right-hand side of the equation.  
\begin{enumerate}
\item For a composition of $n-1$, increase its last part, say $\kappa_c$, to $(\kappa+1)_c$ to obtain an allowed composition of $n$.
\item For a composition of $n-c$ for some $c \in \{c^i\}$, add a part $c_c$ to make an allowed composition of $n$.
\end{enumerate}
These compositions are distinct since those from (1) have an empty last square while those from (2) have a spot in the last square, and the compositions from (2) are distinct since the added last part is different for each $c^i$.  

The reverse map is clear: If the last tile has a spot in the last square, then remove that tile.  Otherwise, decrease the last tile by one square.

The initial values through $a(c^-) = 1$ are clear.  For other $n=c \in \{c^i\}$, the sequence term $a(n)$ calls step (2) of the bijection, generating $(n_n)$ from the empty composition in addition to any compositions consisting of parts with smaller colors.
\end{proof}

See section 3.1 for several examples.

Note that when $c^-=1$, there will be two $a(n-1)$ terms on the right-hand side: The bijection step (1) produces $a(n-1)$ compositions of $n$ with empty last squares while (2) applied to $1 \in \{c^i\}$ produces the $a(n-1)$ compositions of $n$ ending in $1_1$.

\subsection{Prohibited Colors}
For situations where certain colors are prohibited, Theorem \ref{allowed} would suffice using the complementary set of allowed colors.  However, for a finite list of prohibited colors, that would give an infinite recurrence.  Theorem \ref{prohibited} establishes a finite recurrence given a finite list of prohibited colors.

\begin{theorem}\label{prohibited}
For prohibited colors $\{d^i\}$, the number of color restricted $n$-color compositions of $n$ is given by
\[ a(n) = 3a(n-1) - a(n-2) + \sum_i \left( -a(n-d^i) + a(n-d^i-1) \right) \]
with initial values $a(0), \ldots, a(d^+)$ provided by Theorem \ref{allowed} applied to the colors $\{1, \ldots, d^+\} \setminus \{d^i\}$.
\end{theorem}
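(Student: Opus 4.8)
The plan is to prove the recurrence combinatorially, in the same spirit as the proof of Theorem \ref{allowed}, by setting up a bijection between the set of color-restricted $n$-color compositions counted by $a(n)$ and an appropriate signed combination of smaller sets. Let $A$ denote the set of allowed colors, namely all positive integers except those in $\{d^i\}$. The key observation is that Theorem \ref{allowed} gives the ``all colors'' recurrence for this $A$, but that recurrence is infinite; I want to telescope it into a finite one using the geometric-like structure of the tail of $A$.

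First I would start from the recurrence of Theorem \ref{allowed} applied to $A$, which reads $a(n) = a(n-1) + \sum_{c \in A} a(n-c)$. The idea is to compare this with the same relation at $n-1$, i.e.\ $a(n-1) = a(n-2) + \sum_{c \in A} a(n-1-c)$, and subtract. Since $A$ is a cofinite set of positive integers, the difference $\sum_{c \in A} a(n-c) - \sum_{c \in A} a(n-1-c)$ collapses: writing $A = \mathbb{Z}_{\ge 1} \setminus \{d^i\}$, the sum $\sum_{c \ge 1} a(n-c) - \sum_{c \ge 1} a(n-1-c)$ telescopes to $a(n-1)$ (all lower terms cancel, using $a(m)=0$ for $m<0$ and the empty composition only appearing once), and the correction from the finitely many removed colors contributes exactly $-\sum_i\bigl(a(n-d^i) - a(n-d^i-1)\bigr)$. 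Assembling these pieces, $a(n) - a(n-1) = a(n-1) + \bigl[a(n-1) - \sum_i (a(n-d^i) - a(n-d^i-1))\bigr] - \bigl[a(n-2)\bigr]$ after also substituting the $n-1$ instance back in for the leftover $a(n-1)$ term, which rearranges to the claimed $a(n) = 3a(n-1) - a(n-2) + \sum_i\bigl(-a(n-d^i) + a(n-d^i-1)\bigr)$. I would present this telescoping carefully as a single displayed computation rather than a bijection, since the signs make a direct bijective statement awkward; alternatively one can phrase it as an inclusion–exclusion on spotted tilings according to whether the last tile's color lies in $\{d^i\}$, which is really the same bookkeeping.

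For the initial values, I would simply note that the recurrence involves indices $n-1, n-2, n-d^i, n-d^i-1$, all of which are at most $n-1$ once $n > d^+$ is assumed (since $d^i \ge 1$ forces $n - d^i - 1 \le n - 2$), so knowing $a(0), \dots, a(d^+)$ determines everything; and those values are exactly what Theorem \ref{allowed} computes when fed the finite allowed set $\{1, \dots, d^+\} \setminus \{d^i\}$, because no color exceeding $d^+$ can appear as a part in a composition of $n \le d^+$.

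The main obstacle I anticipate is making the telescoping argument rigorous at the boundary: one must track exactly which low-index terms survive the cancellation $\sum_{c\ge 1} a(n-c) - \sum_{c\ge1} a(n-1-c)$, confirm it equals precisely $a(n-1)$ (not $a(n-1) + a(-\text{something})$), and verify that the finite correction terms come with the right sign and are not themselves affected by the $n \le -1$ convention. A clean way to sidestep fiddly case analysis is to introduce the generating function $F(x) = \sum_{n\ge 0} a(n)x^n$, translate Theorem \ref{allowed} into $F(x) = \frac{1}{1-x}\bigl(1 + F(x)\sum_{c\in A}x^c\bigr)$ with $\sum_{c\in A} x^c = \frac{x}{1-x} - \sum_i x^{d^i}$, clear denominators, and read off the recurrence as an identity of power series; but since the paper insists on combinatorial proofs, I would keep the telescoping/inclusion–exclusion formulation and only use generating functions privately as a check.
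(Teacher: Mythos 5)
Your proposal is correct in substance, but it takes a genuinely different route from the paper's. The paper proves the recurrence bijectively: it rewrites it as $a(n)+a(n-2)+\sum_i a(n-d^i)=3a(n-1)+\sum_i a(n-d^i-1)$ and builds an explicit correspondence on spotted tilings (cases according to whether the last square is spotted, whether the last part is $1_1$, and whether the last part is $(d^i+1)_{d^i+1}$), in keeping with the paper's stated aim of all-combinatorial proofs. You instead deduce the finite recurrence from Theorem \ref{allowed} applied to the cofinite allowed set $\{1,2,\ldots\}\setminus\{d^i\}$, subtracting the instance at $n-1$ from the instance at $n$ and telescoping $\sum_{c\ge1}a(n-c)-\sum_{c\ge1}a(n-1-c)=a(n-1)$; this algebra is correct and yields exactly the stated recurrence, and your handling of the initial values coincides with the paper's. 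What your route buys is brevity and a transparent explanation of where the coefficients $3$ and $-1$ come from; what it costs is the bijection itself, plus the boundary bookkeeping you flag but do not finish: the recurrence of Theorem \ref{allowed} is valid only for $n\ge 2$ (at $n=1$ the $a(0)$ term contributes spuriously through the ``extend the last part'' step), so your differencing is justified for $n\ge 3$, i.e., for all $n\ge d^+ + 1$ except the corner case $d^+=1$, $n=2$. That corner is genuinely delicate (for $\{d^i\}=\{1\}$ one has $a(2)=1$ while the recurrence gives $0$), but the paper's own bijection suffers the identical defect there, since its step (2) would have to act on the empty composition; so this is an issue with the theorem's claimed range rather than with your argument. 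A finished write-up should state explicitly that the subtraction is performed for $n\ge\max(d^+ + 1,3)$ and treat any remaining small $n$ directly.
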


\begin{proof}
For the bijection, we rewrite the recurrence as
\[ a(n) +a(n-2) + \sum_i a(n-d^i) = 3a(n-1) + \sum_i a(n-d^i-1). \]
We proceed from the compositions counted by terms in the left-hand side.
\begin{itemize}
\item[1a.] For a composition of $n$ whose last part is $\kappa_j$ for some $j < \kappa$, decrease the last part to $(\kappa-1)_j$ to obtain an allowed composition of $n-1$.
\item[1b.] For a composition of $n$ with last part $1_1$, remove $1_1$ to obtain an allowed compositions of $n-1$.
\item[1c.] For a composition of $n$ whose last part is $\kappa_\kappa$ for some $\kappa \ge 2$ with $\kappa \neq d^i+1$ for any $i$, replace $\kappa_\kappa$ with $(\kappa-1)_{\kappa-1}$ to obtain an allowed composition of $n-1$.
\item[1d.] For compositions of $n$ whose last part is $\kappa_\kappa$ for some $\kappa \ge 2$ with $\kappa= d^i+1$ for some $i$, remove $\kappa_\kappa$ to obtain an allowed composition of $n-d^i-1$.
\item[2.] For a composition of $n-2$, increase its last part $\kappa_j$ to $(\kappa+1)_j$ to obtain an allowed composition of $n-1$.
\item[3.] For any $i$ and a composition of $n-d^i$, add a part $(d^i-1)_{d^i-1}$ to obtain an allowed compositions of $n-1$.
\end{itemize}

Figure \ref{fig:bij1} illustrates these aspects of the bijection.

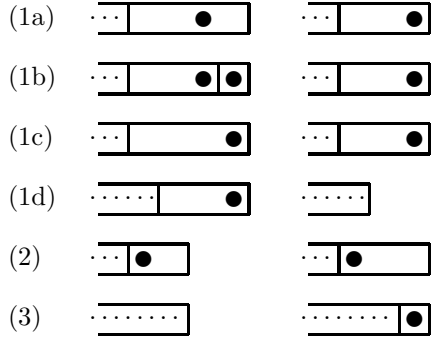
\begin{figure}[h]
\setlength{\unitlength}{.4cm}
\begin{picture}(12,12)%(-11,0.5)
\thicklines

\put(-1,11){\line(1,0){5}}
\put(-1,12){\line(1,0){5}}
\put(0,11){\line(0,1){1}}
\put(4,11){\line(0,1){1}}
\put(2.5,11.5){\circle*{.5}}
\put(-1.3,11.5){$\ldots$}
\put(-4,11.3){(1a)}

\put(6,11){\line(1,0){4}}
\put(6,12){\line(1,0){4}}
\put(7,11){\line(0,1){1}}
\put(10,11){\line(0,1){1}}
\put(9.5,11.5){\circle*{.5}}
\put(5.7,11.5){$\ldots$}

\put(-1,9){\line(1,0){5}}
\put(-1,10){\line(1,0){5}}
\put(0,9){\line(0,1){1}}
\put(3,9){\line(0,1){1}}
\put(4,9){\line(0,1){1}}
\put(2.5,9.5){\circle*{.5}}
\put(3.5,9.5){\circle*{.5}}
\put(-1.3,9.5){$\ldots$}
\put(-4,9.3){(1b)}

\put(6,9){\line(1,0){4}}
\put(6,10){\line(1,0){4}}
\put(7,9){\line(0,1){1}}
\put(10,9){\line(0,1){1}}
\put(9.5,9.5){\circle*{.5}}
\put(5.7,9.5){$\ldots$}

\put(-1,7){\line(1,0){5}}
\put(-1,8){\line(1,0){5}}
\put(0,7){\line(0,1){1}}
\put(4,7){\line(0,1){1}}
\put(3.5,7.5){\circle*{.5}}
\put(-4,7.3){(1c)}
\put(-1.3,7.5){$\ldots$}

\put(6,7){\line(1,0){4}}
\put(7,7){\line(0,1){1}}
\put(6,8){\line(1,0){4}}
\put(10,8){\line(0,-1){1}}
\put(9.5,7.5){\circle*{.5}}
\put(5.7,7.5){$\ldots$}

\put(-1,5){\line(1,0){5}}
\put(-1,6){\line(1,0){5}}
\put(1,5){\line(0,1){1}}
\put(4,5){\line(0,1){1}}
\put(3.5,5.5){\circle*{.5}}
\put(-4,5.3){(1d)}
\put(-1.3,5.5){$\ldots \ldots$}

\put(6,5){\line(1,0){2}}
\put(6,6){\line(1,0){2}}
\put(8,5){\line(0,1){1}}
\put(5.7,5.5){$\ldots \ldots$}

\put(6,1){\line(1,0){4}}
\put(6,2){\line(1,0){4}}
\put(9,1){\line(0,1){1}}
\put(10,1){\line(0,1){1}}
\put(9.5,1.5){\circle*{.5}}
\put(-4,1.3){(3)}
\put(5.7,1.5){$\ldots . \ldots .$}

\put(-1,1){\line(1,0){3}}
\put(-1,2){\line(1,0){3}}
\put(2,1){\line(0,1){1}}
\put(-1.3,1.5){$\ldots . \ldots . $}

\put(-1,3){\line(1,0){3}}
\put(-1,4){\line(1,0){3}}
\put(0,3){\line(0,1){1}}
\put(2,3){\line(0,1){1}}
\put(.5,3.5){\circle*{.5}}
\put(-4,3.3){(2)}
\put(-1.3,3.5){$\ldots$}

\put(6,3){\line(1,0){4}}
\put(6,4){\line(1,0){4}}
\put(7,3){\line(0,1){1}}
\put(10,3){\line(0,1){1}}
\put(7.5,3.5){\circle*{.5}}
\put(5.7,3.5){$\ldots$}

\end{picture}

\caption{The $n$-color compositions with the spotted tilings on the left-hand side are mapped to the corresponding compositions on their right by the bijection in the proof of Theorem \ref{prohibited}, demonstrating each case when $d^1=2$.}\label{fig:bij1}
\end{figure}

As suggested by the labels, maps (1a) through (1d) treat all allowed compositions of $n$, map (2) treats all allowed compositions of $n-2$, and map (3) handles all allowed compositions of $n-d^i$ for the prohibited colors $\{d^i\}$.  Note that (1b) handles the case $d^- \ne 1$; see the note following the proof for the simplified bijection when $d^- = 1$.

Over all allowed compositions of $n$, (1a) produces a complete set of allowed compositions of $n-1$, as does (1b).  The third set of allowed compositions of $n-1$ comes from (1c), (2), and (3): All compositions with an empty final square are produced by (2) applied to all allowed compositions of $n-2$.  For compositions with a spot in the final square, for all $i$, (3) applied to all allowed compositions of $n-d^i$ produces the compositions with final part $(d^i-1)_{d^i-1}$ while (1c) produces all the others.  (Note that the $\kappa \neq d^i+1$ restriction in (1c) guarantees that the replacement part $(\kappa-1)_{\kappa-1}$ uses an allowed color and, since $\kappa \notin \{d^i\}$, it does not duplicate the output of (3).)  Finally, (1d) produces all allowed compositions of $n-d^i-1$ over all $i$.

For the initial values, for $1 \le n \le d^+$, prohibiting the colors $\{d^i\}$ is equivalent to allowing the complementary colors $\{1, \ldots, d^+\} \setminus \{d^i\}$, so that the initial sequence generated by Theorem \ref{allowed} is valid.  In the case that $\{d^i\} = \{1, \ldots, d^+\}$, this leads to $a(0) = 1$, $a(1) = \cdots = a(d^+) = 0$.  In any case, the sequence term for $n=d^++1$ involves step (1d) of the bijection, connecting $(n_n)$ and the empty composition.
\end{proof}

See section 3.2 for further examples.

If $d^-=1$, then the recurrence becomes
\[a(n) + a(n-2) + \sum_{i\ne1} a(n-d^i) = 2a(n-1) + \sum_i a(n-d^i-1)\]
and the bijection simplifies: Step (1b) never occurs, leaving two sets of allowed compositions counted by $n-1$.  Also note that the $d^-=1$ case is excluded from (3) to avoid producing compositions of $n-1$ with last part ``$0_0$.''

Also, whenever there are sequential forbidden colors, terms in the recurrence relation of Theorem \ref{prohibited} cancel out.  For instance, for forbidden colors $k, k+1, \dots, k+d$, the recurrence relation reduces to 
\[a(n) = 3a(n-1) - a(n-2) -a(n-k) + a(n-k-d-1).\]

\subsection{Modular Colors}
When there are infinitely many colors both allowed and prohibited, there can still be a finite recurrence relation for the number of resulting compositions.  Our final main theorem addresses such a situation, where all colors having the same modulus are either all allowed or all prohibited.

\begin{theorem}\label{modular}
For allowed colors congruent to $\{m^i\}$ modulo $m$, the number of color restricted $n$-color compositions of $n$ is given by
\[ a(n) = a(n-1) + a(n-m) - a(n-m-1) + \sum_i a(n-m^i)\]
where $1 \leq m^i \leq m$ for each $i$.
The initial values $a(0), \ldots, a(m+1)$ are provided by Theorem \ref{allowed} applied to the colors $\{m^i\}$ if $m^- \ne 1$, to colors $\{m^i\} \cup \{m+1\}$ if $m^- = 1$.
\end{theorem}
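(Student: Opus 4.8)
The plan is to mirror the bijective strategy of Theorem~\ref{prohibited}, rewriting the recurrence so that both sides count (multi)sets of allowed compositions, and then exhibiting an explicit correspondence on spotted tilings.  First I would move all terms to make the signs positive: the recurrence becomes
\[ a(n) + a(n-m-1) = a(n-1) + a(n-m) + \sum_i a(n-m^i). \]
The left-hand side should count all allowed compositions of $n$ together with all allowed compositions of $n-m-1$; the right-hand side should count all allowed compositions of $n-1$, of $n-m$, and, for each residue class $m^i$, of $n-m^i$.  The goal is a bijection between these two collections.

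\textbf{The building maps.}  From an allowed composition of $n$, read off its last part $\kappa_c$ (where $c \equiv m^i \pmod m$ for some $i$).  I would split into cases according to how $\kappa$ and $c$ sit relative to the boundary $m$:
\begin{itemize}
\item If the last part is not ``minimal'' for its color class --- concretely, if decreasing it by $m$ still leaves an allowed color and a legal part size (i.e.\ $\kappa - m \geq c$) --- then replace $\kappa_c$ by $(\kappa-m)_{c}$; this lands in an allowed composition of $n-m$.  Wait: that changes the sum by $m$, good, but I must be careful that $c \le \kappa - m$, i.e.\ this only applies when $\kappa > c + m - 1$... actually the cleaner split is on whether the spot sits in the \emph{last} $m$ squares of the final tile.
\item If the spot is in the last $m$ squares of the final tile (so $\kappa - c < m$) and $\kappa \ge 2$ with the ``shrink by one'' move $(\kappa-1)_{c'}$ legal and allowed, then decrease the last tile by one square, adjusting the color subscript to stay in an allowed class; this produces an allowed composition of $n-1$.
\item The boundary cases --- where neither interior move is available, forcing the last part to be $(m^i)_{m^i}$ or $(m^i + \text{something} \le m)_{\cdot}$ --- are the ones where we instead \emph{remove} the last tile, landing in an allowed composition of $n - m^i$; these account for the $\sum_i a(n-m^i)$ terms.
\end{itemize}
Conversely, from an allowed composition of $n-m-1$ one \emph{appends} a fresh tile of length $m+1$ in the appropriate allowed color (the residue of $m+1$ modulo $m$ is $1 \equiv m^-$ when $m^- = 1$, which is exactly why the $m^-=1$ case is special and why $m+1$ is thrown into the initial-value list).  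The book-keeping claim to verify is that these maps partition the right-hand collection: the compositions of $n-1$ with empty last square come from the ``increase last part by one'' inverse on compositions of $n-2$ --- no wait, there is no $a(n-2)$ term here, so the split must instead be that compositions of $n-1$ are hit \emph{twice} (once from the $a(n-1)$ summand and once from the append-then-shrink interplay with $a(n-m)$ and $a(n-m-1)$), exactly as in the note following Theorem~\ref{prohibited} where two copies of $a(n-1)$ appear.  I expect the genuinely correct formulation is: compositions of $n-1$ with a spot in the final square versus an empty final square are separated, one family matched with the $a(n-m)$ and $a(n-m-1)$ terms via lengthening/shortening, the other matched directly.

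\textbf{Initial values and the main obstacle.}  For $0 \le n \le m+1$, I would argue that requiring colors in the classes $\{m^i\}$ modulo $m$ is, for such small $n$, the same as allowing the finite color set $\{m^i\} \cap \{1,\dots,n\}$ --- except that the part of size $m+1$ can itself carry a color congruent to $1$, so when $m^-=1$ we must also permit the literal color $m+1$, hence Theorem~\ref{allowed} is invoked on $\{m^i\}$ or on $\{m^i\} \cup \{m+1\}$ accordingly; a short check confirms that the recurrence first ``notices'' a color beyond $m+1$ only at $n = m+2$, via the append map on the empty composition.  The main obstacle will be getting the case division for the forward map exactly right so that it is a genuine bijection: the interaction between the part \emph{size} constraint (a spot in square $i$ forces $\kappa \ge i$) and the \emph{color class} constraint (the subscript must be $\equiv m^i \bmod m$) means the ``shrink by one'' and ``shrink by $m$'' moves can each fail for subtle boundary reasons, and one must verify that precisely the failures of both are absorbed by the $\sum_i a(n-m^i)$ and $a(n-m-1)$ terms, with no composition produced twice and none missed --- this is the analogue of the delicate $\kappa \ne d^i+1$ bookkeeping in Theorem~\ref{prohibited}, and it is where the $-a(n-m-1)$ correction term earns its keep.
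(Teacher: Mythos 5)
Your overall strategy coincides with the paper's: rewrite the recurrence as $a(n)+a(n-m-1)=a(n-1)+a(n-m)+\sum_i a(n-m^i)$ and build a correspondence by surgery on the final tile of a spotted tiling; your discussion of the initial values (color $m+1$ must be adjoined exactly when $m^-=1$, and the recurrence is only needed from $n=m+2$ onward) is also essentially the paper's. However, the core of the proof --- a complete, well-defined case division for the bijection --- is never produced, and the partial maps you sketch do not assemble correctly. Concretely: (a) your first move (when the spot is not among the last $m$ squares of the final tile, delete $m$ tail squares keeping the color) already produces \emph{every} composition of $n-m$, since a last part $\kappa'_c$ arises from $(\kappa'+m)_c$; this exhausts the $a(n-m)$ term and leaves the left-hand family counted by $a(n-m-1)$ with no destination. (b) Your proposal to handle compositions of $n-m-1$ by appending a tile of length $m+1$ sends them to compositions of $n$, i.e., from the left-hand collection back into the left-hand collection, which is incompatible with the bijection you set out to construct; the correct move is to lengthen the last part by one square, yielding precisely the compositions of $n-m$ whose final square is empty. (c) Removing the last tile lands in some $a(n-m^i)$ only when that tile is $\kappa_\kappa$ with $\kappa\le m$; the case $\kappa_\kappa$ with $\kappa\ge m+1$ is never treated (the paper replaces it by $(\kappa-m)_{\kappa-m}$, producing the compositions of $n-m$ with a spotted final square). (d) Your fallback guess that compositions of $n-1$ are ``hit twice'' is false: each right-hand composition is produced exactly once, all of $a(n-1)$ coming from the single move ``spot not in the last square: shrink by one, color unchanged,'' and the delicate split is on the compositions of $n-m$ (spotted versus empty last square), not on $n-1$.

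For comparison, the paper's division is: last part $\kappa_j$ with $j<\kappa$ $\to$ shrink by one (all of $a(n-1)$); last part $\kappa_\kappa$ with $\kappa\le m$ $\to$ remove it (the $\sum_i a(n-m^i)$ terms); last part $\kappa_\kappa$ with $\kappa\ge m+1$ $\to$ replace by $(\kappa-m)_{\kappa-m}$ (compositions of $n-m$ with spotted last square); and compositions of $n-m-1$ $\to$ lengthen the last part by one (compositions of $n-m$ with empty last square). Your alternative split on ``spot within the last $m$ squares'' could in principle be repaired into a different but valid bijection (for instance, sending compositions of $n-m-1$ to compositions of $n-1$ by adding $m$ to the last part, and $\kappa_\kappa$ with $\kappa>m$ to $(\kappa-1)_{\kappa-m}$), but as written you acknowledge the case analysis as an unresolved ``main obstacle,'' so the proof of Theorem \ref{modular} is genuinely incomplete.
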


\begin{proof}
For the bijection, we rewrite the recurrence as
\[ a(n) +a(n-m-1) = a(n-1) + a(n-m) + \sum_i a(n-m^i). \]

We proceed from the compositions counted by terms in the left-hand side.
\begin{itemize}
\item[1a.] For a composition of $n$ whose last part is $\kappa_j$ for some $j < \kappa$, decrease the last part to $(\kappa-1)_j$ to obtain an allowed composition of $n-1$.
\item[1b.] For a composition of $n$ whose last part is $\kappa_\kappa$ with $\kappa \le m$, then $\kappa=m^i$ for some $i$; remove $\kappa_\kappa$ to obtain an allowed composition of $n-m^i$.
\item[1c.] For a composition of $n$ whose last part is $\kappa_\kappa$ with $\kappa \geq m+1$, replace $\kappa_\kappa$ with $(\kappa-m)_{(\kappa-m)}$ to obtain an allowed composition of $n-m$.
\item[2.] For a composition of $n-m-1$, increase its last part, say $\kappa_j$, to $(\kappa+1)_j$ to obtain an allowed composition of $n-m$.
\end{itemize}

Figure \ref{fig:bij2} illustrates these aspects of the bijection.

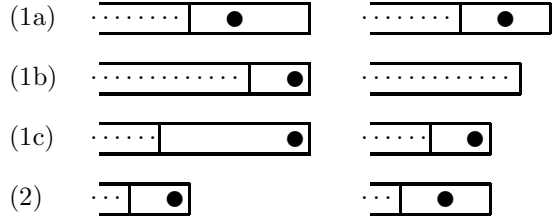
\begin{figure}[t]
\setlength{\unitlength}{.4cm}
\begin{picture}(13,8)%(-11,0.5)
\thicklines

\put(-1,7){\line(1,0){7}}
\put(-1,8){\line(1,0){7}}
\put(2,7){\line(0,1){1}}
\put(6,7){\line(0,1){1}}
\put(3.5,7.5){\circle*{.5}}
\put(-4,7.3){(1a)}
\put(-1.3,7.5){$\ldots . \ldots .$}

\put(8,7){\line(1,0){6}}
\put(8,8){\line(1,0){6}}
\put(11,7){\line(0,1){1}}
\put(14,7){\line(0,1){1}}
\put(12.5,7.5){\circle*{.5}}
\put(7.7,7.5){$\ldots . \ldots .$}

\put(-1,5){\line(1,0){7}}
\put(-1,6){\line(1,0){7}}
\put(4,5){\line(0,1){1}}
\put(6,5){\line(0,1){1}}
\put(5.5,5.5){\circle*{.5}}
\put(-4,5.3){(1b)}
\put(-1.3,5.5){$\ldots\ldots\ldots\ldots .$}

\put(8,5){\line(1,0){5}}
\put(8,6){\line(1,0){5}}
\put(13,5){\line(0,1){1}}
\put(7.7,5.5){$\ldots\ldots\ldots\ldots .$}

\put(-1,3){\line(1,0){7}}
\put(-1,4){\line(1,0){7}}
\put(1,3){\line(0,1){1}}
\put(6,3){\line(0,1){1}}
\put(5.5,3.5){\circle*{.5}}
\put(-4,3.3){(1c)}
\put(-1.3,3.5){$\ldots\dots$}

\put(8,3){\line(1,0){4}}
\put(10,3){\line(0,1){1}}
\put(8,4){\line(1,0){4}}
\put(12,4){\line(0,-1){1}}
\put(11.5,3.5){\circle*{.5}}
\put(7.7,3.5){$\ldots\ldots$}

\put(-1,1){\line(1,0){3}}
\put(-1,2){\line(1,0){3}}
\put(0,1){\line(0,1){1}}
\put(2,1){\line(0,1){1}}
\put(1.5,1.5){\circle*{.5}}
\put(-4,1.3){(2)}
\put(-1.3,1.5){$\ldots$}

\put(8,1){\line(1,0){4}}
\put(8,2){\line(1,0){4}}
\put(9,1){\line(0,1){1}}
\put(12,1){\line(0,1){1}}
\put(10.5,1.5){\circle*{.5}}
\put(7.7,1.5){$\ldots$}

\end{picture}
\caption{Demonstration of each case of the bijection in the proof of Theorem \ref{modular}, with $m=3$ and $m^1=2$.}\label{fig:bij2}
\end{figure}

Similar to before, maps (1a), (1b), and (1c) treat all allowed compositions of $n$ and map (2) treats all allowed compositions of $n-m-1$.

Over all allowed compositions of $n$, (1a) produces all allowed compositions of $n-1$, (1b) produces all allowed compositions of $n-m^i$ for all $\{m_i\}$ with $1 \leq m^i \leq m$, and (1c) produces all allowed compositions of $n-m$ with a spot in the last square (note that if $\kappa$ is an allowed colors, then so is $\kappa-m$).  Finally, over all allowed compositions of $n-m-1$, (2) produces all allowed compositions of $n-m$ with empty last squares.

For the initial values, for $1 \le n \le m+1$, allowing colors congruent to $\{m^i\}$ modulo $m$ is just allowing the colors $\{m^i\}$ with $1 \leq m^i \leq m$ for each $i$ if $m^- \ne 1$, or the colors $\{m^i\} \cup \{m+1\}$ if $m^- = 1$.  Larger colors are handled by the bijection.
\end{proof}

The special treatment of color $m+1$ in the proof, should it be included, comes from step (2) of the bijection which at $n=m+1$ would call for sending ``$0_0$'' to ``$1_0$.''  Addressing $n=m+1$ in the setting of Theorem \ref{allowed} means that the bijection here will only be applied to colors $n=m+2$ or greater where compositions of $n-m-1$ have nonzero parts with nonzero colors.

See section 3.3 for additional examples.

\section{Examples and Additional Results}
The majority of this article is dedicated to establishing further results for particular classes of allowed or prohibited colors, such as direct formulas and bijections to other classes of compositions or various combinatorial objects.  The three subsections mirror the three theorems above.  Most results are general cases. There are also a handful of more specific results, e.g., prohibiting the color 2; see \cite{hw} for additional specific results.  We will use the following definitions in describing several of the bijections.

\begin{definition}
Given a tile $\kappa_c$ of an $n$-color composition, the $c$-block consists of the first $c$ squares: starting from the left, $c-1$ empty squares and the spotted square indicating the tile's color.  The $c$-block is followed by a tail of $\kappa-c$ empty squares; if $\kappa = c$ then we say the tail is empty.
\end{definition}

\subsection{Allowed colors}
We begin with the examples having a finite number of allowed colors.  The recurrences follow from Theorem \ref{allowed}.  In each of the following general cases, we derive a closed formula and, in most, present bijections to other combinatorial objects.  See \cite{hw} for further particular examples.

\subsubsection{Single colored compositions}
First we consider allowing parts of only one color.  The direct formula in the next result is a generalization of the following classical result: The number of regular compositions of $n$ with $m$ parts is the binomial coefficient $\binom{n-1}{m-1}$.  The binomial theorem then gives one way (of many) to conclude that there are $2^{n-1}$ regular compositions of $n$.  Note that only allowing the color $c=1$ is equivalent to regular compositions; simply ignore the color/spot.

\begin{proposition} \label{1colorform}
The number of $n$-color compositions of $n$ with only the one color $c > 1$ allowed is given by the recurrence relation
\[a(n) = a(n-1) + a(n-c)\]
with $a(n) = 0$ for $n \leq -1$, $a(0) = 1$, and $a(1) = \cdots = a(c-1) = 0$.  Also,
\[a(n) = \sum_{m=1}^n {n-(c-1)m - 1 \choose m-1}.\]
\end{proposition}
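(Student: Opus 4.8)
The plan is to dispatch the recurrence in one line and then prove the closed formula combinatorially with spotted tilings. The recurrence $a(n) = a(n-1) + a(n-c)$ together with the stated initial values is exactly Theorem~\ref{allowed} specialized to a single allowed color $c > 1$, so that $c^- = c^+ = c$; I would only add the remark that $a(c) = a(c-1) + a(0) = 1$ agrees with the initial data.

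For the closed formula, observe that an $n$-color composition using only color $c$ has every part of the form $\kappa_c$ with $\kappa \ge c$; in spotted-tiling language each tile is a $c$-block (length $c$, spot in its last square) followed by a tail of $\kappa - c \ge 0$ empty squares. I would count the compositions with a fixed number $m$ of parts and then sum over $m$. Such a composition is determined by first laying down $m$ consecutive $c$-blocks---which uses $cm$ squares and forces $cm \le n$---and then distributing the remaining $n - cm$ empty squares among the $m$ tails, in order, each tail allowed to be empty. That distribution is a weak composition of $n - cm$ into $m$ parts, of which there are $\binom{(n-cm)+m-1}{m-1} = \binom{n-(c-1)m-1}{m-1}$; conversely, any such distribution recovers a legal composition, so the count with exactly $m$ parts is this binomial coefficient. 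Summing over $m \ge 1$ and noting that the summand vanishes unless $cm \le n$ (hence in particular $m \le n$) gives $a(n) = \sum_{m=1}^{n}\binom{n-(c-1)m-1}{m-1}$ for $n \ge 1$; the $n = 0$ case is the empty composition, already recorded as $a(0) = 1$.

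I do not expect a real obstacle here, since the argument is a stars-and-bars count dressed in tiling language. The only points needing care are the bookkeeping in the step translating ``tails'' into a weak composition of $n - cm$ (equivalently, the substitution $\lambda^j = \kappa^j - (c-1)$ turning parts $\ge c$ into arbitrary positive parts), and the verification that extending the summation range up to $m = n$ introduces only zero terms.
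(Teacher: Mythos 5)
Your proposal is correct and matches the paper's proof essentially step for step: the recurrence is cited from Theorem~\ref{allowed}, and the closed formula comes from fixing $m$ parts, viewing each tile as a $c$-block plus tail, and distributing the remaining $n-mc$ empty squares among the $m$ tails via stars and bars to get $\binom{n-(c-1)m-1}{m-1}$, summed over $m$. The extra remarks on $a(c)=1$ and on the vacuous terms with $cm>n$ are fine but not needed.
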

\begin{proof}
The recurrence comes from Theorem \ref{allowed}.

For the direct formula, consider the compositions (and hence the corresponding spotted tiling representations) of $n$, allowing color $c$, with $m$ parts. The entire tiling is an arrangement of $m$ $c$-blocks and $n-mc$ empty squares that constitute the tails. The number of ways to assign $n-mc$ empty squares to the tails of $m$ tiles is
$$ { n - mc + m -1 \choose m-1 }$$
equivalent to the binomial coefficient above.  Summing over possible values of $m$ gives the formula.
\end{proof}

The counting sequences for small cases include the Fibonacci numbers for $c=2$, the ``tribonacci'' numbers for $c= 3$, \cite[A003269]{oeis} for $c=4$, and \cite[A003520]{oeis} for $c=5$. Such sequences also enumerate compositions with constraints on part sizes as detailed in the next result.

\begin{proposition} \label{1colorcomb}
Given positive integers $n$ and $c$, the following three combinatorial objects are equinumerous.
\begin{enumerate}
\item[(i.)] $n$-color compositions of $n$ with only color $c$ allowed,
\item[(ii.)] regular compositions of $n$ with parts $c$ or greater,
\item[(iii.)] regular compositions of $n$ with parts 1 or $c$ and first part $c$.
\end{enumerate}
\end{proposition}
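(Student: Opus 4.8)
The plan is to connect the three families by two explicit bijections, $(i)\leftrightarrow(ii)$ and $(ii)\leftrightarrow(iii)$, both described most naturally via spotted tilings and $c$-blocks; composing them also yields a direct $(i)\leftrightarrow(iii)$ map.

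For $(i)\leftrightarrow(ii)$: in an $n$-color composition using only the color $c$, every tile has the form $\kappa_c$, which forces $\kappa\ge c$, so as a spotted tiling the tile is a $c$-block followed by a tail of $\kappa-c$ empty squares. Since the color is the same on every tile, erasing all the spots loses no information and yields a regular composition of $n$ with every part $\ge c$; placing a spot in the $c$-th square of each tile inverts this. So $(i)$ and $(ii)$ are in bijection.

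For $(ii)\leftrightarrow(iii)$ I would use block expansion: given a regular composition $(\kappa_1,\dots,\kappa_m)$ of $n$ with each $\kappa_j\ge c$, replace each part $\kappa_j$ by the string $c,\underbrace{1,\dots,1}_{\kappa_j-c}$ and concatenate, obtaining a regular composition of $n$ with parts in $\{1,c\}$ whose first part is $c$ (the leading entry of the first block). The inverse scans a composition with parts in $\{1,c\}$ and first part $c$ from left to right, opens a new block at each part equal to $c$, appends the subsequent $1$'s to it, and collapses each block $c,1,\dots,1$ with $t$ ones to the single part $c+t\ge c$. Composing with the previous bijection gives the direct description: a tile $\kappa_c$ of an object in $(i)$ becomes its $c$-block (a part $c$) followed by its tail (that many parts $1$).

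The only real point of care — and the main thing to get right — is that the first-part-$c$ hypothesis in $(iii)$ is \emph{exactly} what makes the block decomposition well defined and total (every part belongs to a unique block, with nothing before the first $c$), so that the two block maps are genuinely mutually inverse; after that the proof is bookkeeping. As a check, counting the $m$-part objects on each side gives $\sum_{m}\binom{n-(c-1)m-1}{m-1}$ in all three cases, agreeing with Proposition \ref{1colorform}. (As there, one reads the statement with $c\ge 2$; for $c=1$, $(i)$ and $(ii)$ are all regular compositions of $n$ while $(iii)$ degenerates.)
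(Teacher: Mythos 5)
Your proposal is correct and follows essentially the same route as the paper: the (i)$\leftrightarrow$(ii) map is exactly the spot-erasing/spot-adding bijection, and your (ii)$\leftrightarrow$(iii) block expansion, once composed with it, is precisely the paper's direct map sending each $c$-block to a part $c$ and each tail square to a part $1$ (with the same inverse of merging a $c$ with its following $1$'s). Your observation about the role of the first-part-$c$ condition and your remark on the degenerate $c=1$ case are consistent with the paper's discussion following the proof.
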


\begin{proof}
We relate each type of restricted uncolored compositions to (i).  The connection with (ii) is clear:
A part with color $c$ has size at least $c$, so removing the color from the parts of the $n$-color composition (equivalently, removing the spots from the corresponding spotted tiling) leaves a regular composition of the desired type.  For the reverse direction, simply add the color $c$ to each part/add a spot in square $c$ of each tile.

To connect (i) and (iii), consider the spotted tiling of a composition with color $c$ and the following bijection.  
\begin{enumerate}
\item In each tile, map the $c$-block to a part $c$.
\item Map each square in the tail to a part 1.
\end{enumerate}
For the reverse map, combine each $c$ with any subsequent parts 1 to make a tile $\kappa_c$. Figure 4
%\ref{fig:bij1c} 
shows an example.
\end{proof}
\begin{figure}[t]
\setlength{\unitlength}{.4cm}
\begin{picture}(12,4)%(-11,0.5)
\thicklines

\put(0,3){\line(1,0){12}}
\put(0,4){\line(1,0){12}}
\put(0,3){\line(0,1){1}}
\put(5,3){\line(0,1){1}}
\put(2.5,3.5){\circle*{.5}}
\put(8,4){\line(0,-1){1}}
\put(12,4){\line(0,-1){1}}
\put(7.5,3.5){\circle*{.5}}
\put(10.5,3.5){\circle*{.5}}

\put(0,1){\line(1,0){12}}
\put(0,2){\line(1,0){12}}
\put(0,1){\line(0,1){1}}
\put(3,1){\line(0,1){1}}
\put(4,1){\line(0,1){1}}
\put(5,1){\line(0,1){1}}
\put(8,1){\line(0,1){1}}
\put(11,1){\line(0,1){1}}
\put(12,1){\line(0,1){1}}

\end{picture}
\caption{An example of the bijection in Proposition \ref{1colorcomb} for $n=12$ and $c=3$.  The spotted tiling of the $n$-colored composition $(5_3, 3_3, 4_3)$ is mapped to the regular composition tiling for $(3,1,1,3,3,1)$ below it.}\label{fig:bij1c}
\end{figure}
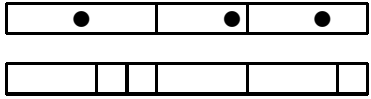

It is interesting to consider Proposition \ref{1colorcomb} when $c=1$.  This allows one possibility for every part size which is equivalent to regular compositions (simply erase the spots), so the connection between (i) and (ii) is clear.
What is described by (iii) in this case?  There is only one composition of $n-1$ where every part is a 1, namely $(1, \ldots, 1)$ with $n-1$ parts.  Here we need to understand ``1 or $c$'' with $c=1$ as indicating two types of 1s, say 1 and $\overline{1}$.  Indeed, there are $2^{n-1}$ compositions of $n-1$ with each part 1 or $\overline{1}$, as there are two choices for each of the $n-1$ parts.

\subsubsection{Compositions with two colors}
Next, we consider cases where two arbitrary colors are allowed.  We give two results analogous to, but a little more complicated than, Propositions \ref{1colorform} and \ref{1colorcomb} above.  In the following proposition, we follow the convention that $\binom{m}{x} = 0$ for noninteger $x$.

\begin{proposition} \label{2colorform}
The number of $n$-color compositions of $n$ with colors $b$ and $c$ (where $b<c$) is given by
\[a(n) = a(n-1) + a(n-b) + a(n-c)\]
with $a(n) = 0$ for $n \leq -1$, $a(0) = 1$, and $a(1) = \cdots = a(b-1) = 0$ if $b>1$.  Also,
\[a(n)=\sum_{m=1}^n \sum_{i=0}^{n-bm} {i+m-1 \choose m-1}  {m \choose (n-bm-i)/(c-b)}.  \]
\end{proposition}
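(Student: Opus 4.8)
The plan is to read the recurrence off Theorem \ref{allowed} applied to the allowed color set $\{b,c\}$, so the real work is the closed formula, which I would obtain by refining the spotted-tiling count behind Proposition \ref{1colorform} so that it keeps track of a second color.

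First I would fix the number of parts $m$ and analyze the spotted tiling of an $n$-color composition of $n$ having $m$ parts, each of color $b$ or $c$. Every tile $\kappa_j$ with $j\in\{b,c\}$ decomposes canonically into its $j$-block (the leftmost $j$ squares, ending in the spot) followed by a tail of $\kappa-j$ empty squares, and conversely the composition is recovered uniquely from the ordered list of the $m$ colors together with the ordered list of the $m$ tail lengths. Regard each of the $m$ tiles as carrying a mandatory $b$-block, so $bm$ squares are committed before any choices are made; a tile whose color is $c$ consumes an additional $c-b$ mandatory squares, namely the difference in length between a $c$-block and a $b$-block. If exactly $k$ of the $m$ tiles have color $c$ — and there are $\binom{m}{k}$ ways to choose which ones — then $bm+k(c-b)$ squares are mandatory and the remaining $i:=n-bm-k(c-b)$ squares are distributed, with repetition and in order, among the $m$ tails, which can be done in $\binom{i+m-1}{m-1}$ ways. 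Every quadruple (value of $m$, value of $k$, choice of the color-$c$ tiles, sequence of tail lengths) yields a distinct composition, and every composition of the prescribed type arises this way exactly once.

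Summing over all $m\ge1$ and $k\ge0$ gives
\[ a(n)=\sum_{m\ge1}\sum_{k\ge0}\binom{m}{k}\binom{n-bm-k(c-b)+m-1}{m-1}, \]
and reindexing the inner sum by the tail total $i=n-bm-k(c-b)$, so that $k=(n-bm-i)/(c-b)$, rewrites this as
\[ a(n)=\sum_{m=1}^{n}\sum_{i=0}^{n-bm}\binom{i+m-1}{m-1}\binom{m}{(n-bm-i)/(c-b)}, \]
which is the claimed identity: the stated convention $\binom{m}{x}=0$ for non-integer $x$ discards the values of $i$ for which $n-bm-i$ is not a non-negative multiple of $c-b$, the usual convention $\binom{m}{k}=0$ for $k>m$ discards $k>m$, and $m$ may be capped at $n$ since $b\ge1$ forces the $i$-range to be empty once $bm>n$. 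I expect the only delicate point to be this reindexing-and-conventions bookkeeping — checking that passing from the $(m,k)$-sum to the $(m,i)$-sum neither adds nor drops terms, and that a color-$c$ tile genuinely has size at least $c$ so that its tail length is nonnegative; the underlying combinatorics is a direct extension of Proposition \ref{1colorform}.
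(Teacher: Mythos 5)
Your proposal is correct and follows essentially the same argument as the paper: fix the number of parts $m$, commit $bm$ squares to the spots and the $b-1$ squares preceding each spot, allocate the leftover squares between tails (counted by $\binom{i+m-1}{m-1}$) and the extra $c-b$ squares needed to upgrade selected tiles to $c$-blocks (counted by $\binom{m}{\cdot}$). The only difference is cosmetic — you sum over the number $k$ of color-$c$ parts and then reindex to the tail total $i$, while the paper sums over $i$ directly and invokes the divisibility/binomial-coefficient convention — so no substantive comparison is needed.
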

\begin{proof}
The recurrence follows directly from Theorem \ref{allowed}.

For the direct formula, let $m$ be the number of parts.  Since only colors $b$ and $c$ are allowed, each spot is preceded by at least $b-1$ empty squares; these and the spots account for $bm$ squares.  These are either $b$-blocks or incomplete $c$-blocks in need of additional empty squares.  The remaining $n-bm$ squares either contribute to tails (empty squares after spots to make greater parts) or complete $c$-blocks (empty squares before the spot).  

More precisely, let $i$ be the total number of squares in the tails of parts.  There are ${i+m-1 \choose m-1}$ ways of partitioning these $i$ squares into the tails of $m$ tiles.  The remaining $n-b m - i$ squares need to be distributed before spots in some tiles.  For each tile, we have already accounted for $b-1$ empty squares before the spot; if additional empty squares are added, then there must be exactly $c-b$ of them (to make the part have color $c$ rather than $b$).  This is only possible when $n-bm - i$ is divisible by $c-b$, in which case there are ${m \choose (n-bm-i)/(c-b)}$ ways to select the tiles to extend to $c$-blocks.  (Note that, by the binomial coefficient convention, nonzero contributions to the sum arise only arise when $n-bm-i$ is a multiple of $c-b$.)
\end{proof}

\begin{proposition} \label{2colorcomb}
Given positive integers $n$ and $b, c$ with $b<c$, the following three combinatorial objects are equinumerous.
\begin{enumerate}
\item[(i.)] $n$-color compositions of $n$ with only colors $b$ and $c$ allowed,
\item[(ii.)] regular compositions of $n$ with parts $b$ or greater where there are two types of parts $c$ and greater,
\item[(iii.)] regular compositions of $n$ with parts 1, $b$, or $c$ with first part $b$ or $c$.
\end{enumerate}
\end{proposition}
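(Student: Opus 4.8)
The plan is to establish a chain of bijections mirroring the structure of Proposition \ref{1colorcomb}, with the spotted-tiling picture as the hub. For the equivalence of (i.) and (ii.), I would again erase spots: an $n$-color composition using only colors $b$ and $c$ becomes a regular composition of $n$ whose parts all have size at least $b$, but now each part $\kappa$ carries a residual datum (was its color $b$ or $c$?). A part of size $\kappa$ colored $b$ and one of size $\kappa$ colored $c$ both erase to the same tile of length $\kappa$; since a color-$c$ tile requires $\kappa \ge c$, the two-color ambiguity persists precisely for parts of size $c$ or greater, while parts of size $b \le \kappa < c$ are forced to color $b$. Thus forgetting the spot but recording ``$b$-type'' versus ``$c$-type'' for the long parts is exactly a bijection with the compositions described in (ii.). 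The reverse map reinstates a spot in square $b$ for a $b$-type part and in square $c$ for a $c$-type part.

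For the equivalence of (i.) and (iii.), I would use the $c$-block terminology from the Definition. Given the spotted tiling of a composition with colors $b$ and $c$: each tile $\kappa_c$ has a $c$-block of length $c$ followed by a (possibly empty) tail of $\kappa - c$ empty squares; each tile $\kappa_b$ has a $b$-block of length $b$ followed by a tail of $\kappa - b$ empty squares. Map a $b$-block to a part $b$, a $c$-block to a part $c$, and each square in any tail to a part $1$. Reading left to right produces a regular composition of $n$ using only parts $1$, $b$, and $c$, and whose first part is the first block, hence $b$ or $c$. For the reverse map, scan left to right: each $b$ initiates a $b$-type tile, each $c$ initiates a $c$-type tile, and each subsequent run of $1$s is absorbed into the current tile as its tail, extending the part size accordingly; the requirement that the first part be $b$ or $c$ guarantees that no leading $1$s are orphaned. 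One should note, as in the $c=1$ discussion after Proposition \ref{1colorcomb}, that when $b=1$ the symbol ``$1$'' from a tail and the symbol ``$1$'' representing a $b$-block collide; the clean statement requires $b \ge 2$, or else an explicit ``two types of $1$'' convention, and I would add a remark to that effect (and note that if additionally $b = 1$, then the $b$-blocks and tail-squares must be distinguished, which recovers the flavor of the earlier two-types-of-$1$ phenomenon).

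The main obstacle is bookkeeping in the reverse maps rather than any genuine difficulty: for (iii.)$\to$(i.) I must check that the decomposition of a parts-$\{1,b,c\}$ composition into maximal blocks-plus-$1$-tails is well defined and inverts the forward map, which is immediate once the ``first part is $b$ or $c$'' hypothesis rules out a prefix of $1$s with no block to attach to; and for (ii.)$\to$(i.) I must confirm the two-type labelling of parts $\ge c$ is exactly what records the color choice, with parts in $[b,c)$ receiving no choice. I would present Figure-style examples (analogous to Figure \ref{fig:bij1c}) illustrating both bijections for a small $n$ with, say, $b=2$, $c=3$, and close with the remark on the degenerate case $b=1$.
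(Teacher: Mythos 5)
Your proposal is correct and follows essentially the same route as the paper: (i.)$\leftrightarrow$(ii.) by erasing spots and noting the two-type ambiguity only for parts of size $c$ or greater, and (i.)$\leftrightarrow$(iii.) by mapping $b$-blocks and $c$-blocks to parts $b$ and $c$ and tail squares to parts $1$, with the reverse map absorbing runs of $1$s into the preceding block. Your added caveat about the degenerate case $b=1$ (two types of $1$, in the spirit of the $c=1$ discussion after Proposition \ref{1colorcomb}) is a reasonable supplementary remark but not a departure from the paper's argument.
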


\begin{proof}
The proof is fairly similar to that of Proposition \ref{1colorcomb}.

To connect the $n$-color compositions of (i) to the regular compositions of (ii), it suffices to note that every part must be at least $b$ (since the smallest allowed color is $b$), and parts $c$ or greater each have two possible types corresponding to the allowed colors $b$ and $c$.

With the following bijection connects the $n$-color compositions of (i) to the union of regular compositions given in (iii).  
\begin{enumerate}
\item In each tile, map the $b$- or $c$-block to a part $b$ or $c$, respectively.
\item Map each square in the tail to a part 1.
\end{enumerate}

For the reverse map, combine each $b$ or $c$ with any subsequent parts 1 to make a tile $\kappa_b$ or $\kappa_c$, respectively.
\end{proof}

There are further results for the case of consecutive colors ($c = b+1$) and several connections to well known integer sequences for small values of $b$; see \cite{hw} for details.  The interested reader could formulate and prove analogous statements to Propositions \ref{2colorform} and \ref{2colorcomb} for three or more arbitrary colors.

\subsubsection{Compositions with colors $1,2,\ldots,c$}
We conclude this subsection by considering consecutive allowed colors starting with 1 and continuing to an arbitrary $c \ge 1$, a case considered in the first paper on $n$-color compositions \cite[\S 3]{a}.  The formula in the following enumeration result combines to $c$ nested summations; after the proof we expand the formulas for small $c$.

\begin{proposition}\label{prop:allow1d} 
The number of $n$-color compositions of $n$ allowing colors $1,2,\ldots,c$ is given by 
\[a(n) = 2a(n-1) + a(n-2) + \cdots + a(n-c)\]
with $a(0) = a(1) = 1$.  Also, 
$$ a(n) = \sum_{i_c=1}^n \sum_{\ell=0}^{n-i_c}  {\ell+i_c-1 \choose i_c-1} G_c(n_c, i_c)  $$
where 
$$ G_j(n_j, i_j) = \sum_{i_{j-1}=0}^{i_j}  {i_j \choose i_j-i_{j-1}}  G_{j-1}(n_{j-1}, i_{j-1}) $$
for $3 \le j \le c$ and $G_2(n_2, i_2) = {i_2 \choose n_2 }$ where $n_c = n-i_c-\ell$ and, for $ 2\le j \le c-1$, $n_j = n_{j+1} - j(i_{j+1}-i_j)$.
\end{proposition}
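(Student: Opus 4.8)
The recurrence is immediate from Theorem \ref{allowed} with allowed colors $\{1,2,\ldots,c\}$: the single $a(n-1)$ term there combines with the $a(n-c^i)$ term for $c^i=1$ to give $2a(n-1)$, and the remaining terms are $a(n-2),\ldots,a(n-c)$; the initial values $a(0)=a(1)=1$ follow since allowing color $1$ alone gives regular compositions. So the work is entirely in the direct formula, and the plan is to count spotted tilings of $n$ using colors $1,\ldots,c$ by stratifying on the number of tiles of each color.

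The plan is to first peel off the largest color. Fix $i_c$ to be the number of parts of color $c$ (equivalently, tiles containing a full $c$-block plus a tail); summing over $i_c$ from $1$ to $n$ will be the outermost sum. Each color-$c$ tile contributes a mandatory $c$-block of length $c$ together with a tail of nonnegative length; writing $\ell$ for the total tail length over all color-$c$ tiles, there are $\binom{\ell+i_c-1}{i_c-1}$ ways to distribute the $\ell$ tail squares among the $i_c$ tiles (weak compositions of $\ell$ into $i_c$ parts). After removing these $i_c$ color-$c$ tiles we are left with a "gap" of total length $n_c = n - i_c - \ell$ that must be filled, in order, by the remaining tiles using colors $1,\ldots,c-1$ — but crucially the positions of the color-$c$ tiles among the others are not yet fixed. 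The key bookkeeping device is to track, at each stage $j$, how many "slots" $i_j$ are available to interleave the smaller tiles: this is where the binomial $\binom{i_j}{i_j-i_{j-1}}$ in the recursion for $G_j$ comes from — choosing which of the $i_j$ current tiles/slots will be the color-$(j-1)$-or-smaller continuation. One then recursively defines $G_j(n_j,i_j)$ as the number of ways to complete a length-$n_j$ region using colors $1,\ldots,j$ given $i_j$ available slots, with $n_j = n_{j+1} - j(i_{j+1}-i_j)$ recording the length consumed when $i_{j+1}-i_j$ of the slots are filled by color-$j$ blocks (each of length $j$, with no tail since color $j$ is now the largest remaining), and the base case $G_2(n_2,i_2)=\binom{i_2}{n_2}$ counting placements of $n_2$ color-$2$ blocks (length $2$, i.e. one empty square then a spot) into $i_2$ slots with the leftover slots becoming single color-$1$ squares — this forces $n_2 \le i_2$ and consumes exactly $2n_2 + (i_2 - n_2) = n_2 + i_2$ squares, which the constraint chain must reconcile.

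The main obstacle I anticipate is getting the slot-counting exactly right so that the recursive substitutions $n_j = n_{j+1} - j(i_{j+1}-i_j)$ and the binomials $\binom{i_j}{i_j - i_{j-1}}$ assemble into a genuine bijection rather than an over- or under-count. The cleanest way to discharge this is to set up a single explicit bijection: given a spotted tiling, read off $(i_c,\ell)$ and then, for $j=c-1,\ldots,2$, the number $i_{j+1}-i_j$ of color-$j$ tiles, and verify that the residual lengths $n_j$ defined by the stated recursion are exactly the lengths that remain, so that each factor in the nested sum counts an independent choice. I would present the argument as "strip off color $c$, then induct on $c$," proving by induction that $G_c(n_c,i_c)$ counts the completions using colors $1,\ldots,c-1$ of a length-$n_c$ region with $i_c$ insertion slots; the inductive step is precisely the definition of $G_j$ in terms of $G_{j-1}$, and the base case $c=2$ is the binomial $\binom{i_2}{n_2}$. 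After the proof I would tabulate the collapsed single-sum formulas for $c=2,3$ as a sanity check against the Fibonacci and tribonacci-type sequences mentioned earlier.
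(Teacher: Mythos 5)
Your handling of the recurrence is fine (it is just Theorem \ref{allowed} with the doubled $a(n-1)$ term coming from the allowed color $1$). But the combinatorial set-up you propose for the direct formula does not match the formula you are trying to prove, and as planned it would fail. You take $i_c$ to be the number of color-$c$ parts and $\ell$ to be the total tail length of the color-$c$ tiles only. With those meanings the outer sum $\sum_{i_c=1}^{n}$ misses every composition containing no part of color $c$; removing the $i_c$ color-$c$ tiles leaves a region of length $n - c\,i_c - \ell$, not the stated $n_c = n - i_c - \ell$; and the tails of the smaller-colored tiles are never accounted for anywhere: your recursion insists that color-$j$ blocks appear ``with no tail since color $j$ is now the largest remaining,'' but a part of color $j$ may have any length at least $j$ (the color is the subscript, not the length), so tails occur for parts of every color. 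These are not bookkeeping details for the ``constraint chain to reconcile'' later; with your definitions the nested sum simply does not count the objects.

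The reading that makes the formula work (and is the paper's) is different: $i_c$ is the \emph{total} number of parts, i.e.\ the number of spotted squares, and $\ell$ is the total tail length over \emph{all} tiles, so that $n_c = n - i_c - \ell$ is exactly the number of empty squares that precede spots inside blocks. The binomial $\binom{\ell+i_c-1}{i_c-1}$ distributes all tails at once, and $G_j(n_j,i_j)$ counts the ways to distribute $n_j$ before-spot squares among $i_j$ parts whose colors are at most $j$: here $i_j$ is the number of parts of color at most $j$, the factor $\binom{i_j}{i_j-i_{j-1}}$ chooses which of these parts have color exactly $j$ (each such part consuming $j-1$ of the squares, whence $n_{j-1}=n_j-(j-1)(i_j-i_{j-1})$), and the base case $\binom{i_2}{n_2}$ chooses which of the remaining parts have color $2$. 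Your ``slot'' binomial is close in spirit to this choice of which parts receive which color, so the repair is to redefine $i_c$, $\ell$, and the $n_j$ as above and run your induction on $j$ over the quantity ``empty squares before spots'' rather than over residual gap lengths after stripping whole color-$c$ tiles.
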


\begin{proof}  The recurrence follows directly from Theorem \ref{allowed}.

For the direct formula, let $i_c$ be the number of parts, equivalently, the number of spotted squares in the tiling.  Also, let $\ell$ be the total number of squares in the tails of the tiles.   There are ${\ell+i_c-1 \choose i_c-1}$ ways of partitioning these $\ell$ squares into the tails of $i_c$ tiles.  It remains to distribute the other $n_c = n-i_c-\ell$ squares as the empty squares preceding spotted squares in the $k$-blocks for tiles with color $k$ (with $k \le c$); suppose there are $G_c(n_c, i_c)$ ways to do this.  Then the number of $n$-color compositions of $n$ allowing colors $1,2,\ldots,c$ is
$$ a(n) = \sum_{i_c=1}^n  \sum_{\ell=0}^{n-i_c} {\ell+i_c-1 \choose i_c-1} G_c(n_c, i_c).$$

Determining $G_c(n_c, i_c)$ essentially involves an inductive process.  We detail the next step, a general step, and the last step (which can be considered the base case).

Let $i_j$ be the number of parts with color at most $j$ (so that, in our setting, $i_c$ is the total number of parts).  Now $i_c - i_{c-1}$ is the number of parts with color $c$ and there are $\binom{i_c}{i_c-i_{c-1}}$ ways to select which parts have color $c$.  Since each color $c$ part requires $c-1$ empty squares in its $c$-block, we have $n_{c-1}=n_c - (c-1)(i_c - i_{c-1})$ other squares left to assign to the $i_{c-1}$ parts with colors at most  $c-1$.  Suppose there are $G_{c-1}(n_{c-1},i_{c-1})$ ways to do this.  Then
\[ G_c(n_c, i_c) = \sum_{i_{c-1}=0}^{i_c} \binom{i_c}{i_c - i_{c-1}} G_{c-1}(n_{c-1},i_{c-1}).\]

In general, suppose $G_j(n_j,i_j)$ is the number of ways to distribute $n_j$ empty squares preceding spots to form $k$-blocks for $i_j$ parts with colors at most $j$.  There are $i_j - i_{j-1}$ parts of color $j$ chosen among all the $i_j$ parts in  $\binom{i_j}{i_j-i_{j-1}}$ ways leaving $n_{j-1}=n_j - (j-1)(i_j - i_{j-1})$ other squares left to assign to the $i_{j-1}$ parts with color at most $j-1$; suppose there are $G_{j-1}(n_{j-1},i_{j-1})$ ways to do this.  Then
\[ G_j(n_j, i_j) = \sum_{i_{j-1}=0}^{i_j} \binom{i_j}{i_j - i_{j-1}} G_{j-1}(n_{j-1},i_{j-1}).\]

For $j=2$, the formula simplifies considerably.  The $i_2$ parts have color 1 or 2.  Each of the $n_2 = n_3 - 2(i_3 - i_2)$ empty squares precedes a spot to make the 2-block of a color 2 part.  Therefore $G_2(n_2, i_2) = {i_2 \choose n_2 }$.
\end{proof}

As illustrations of the proposition, we detail the formulas for small values of $c$.  As mentioned above, the $c=1$ case corresponds to regular compositions.  In terms of the proposition formula, it is easy to see that $\ell =n -i_1$ where $i_1$ is the number of parts. Then there are ${\ell + i_1 -1 \choose i_1-1}$ ways to distribute the empty squares among the $i_1$ tails. Summing over $i_1$ gives
$$ \sum_{i_1=1}^n {\ell + i_1 -1 \choose i_1-1} = \sum_{i_1=1}^n {n-1 \choose i_1-1} = 2^{n-1}. $$

For $c=2$, we have $G_2(n_2,i_2) = {i_2 \choose n_2}$ with $n_2 =  n-i_2-\ell$, therefore
$$ \sum_{i_2=1}^n \sum_{\ell =0}^{n-i_2} {\ell + i_2 -1 \choose i_2-1}  { i_2 \choose n-i_2-\ell}  $$
gives the number of $n$-color compositions of $n$ allowing colors 1 and 2.

When $c=3$, we have $G_3(n_3, i_3) = \sum_{i_2 = 0}^{i_3} {i_3 \choose i_3-i_2}G_2(n_2, i_2)$ with $n_3 = n - i_3 - \ell$ and $n_2 = n_3 - 2 (i_3 - i_2)$. Thus the number of $n$-color compositions of $n$ allowing colors 1, 2, and 3 is
$$ \sum_{i_3=1}^n \sum_{\ell =0}^{n-i_3} \sum_{i_2 = 0}^{i_3}  {\ell + i_3 -1 \choose i_3-1}   {i_3 \choose i_3-i_2} { i_2 \choose n-3i_3- 2 i_2 - \ell} . $$

\subsection{Prohibited colors}
Our primary applications of Theorem \ref{prohibited} concern the case where colors $1, \ldots, d$ are prohibited for an arbitrary $d \ge 1$.  The section also includes results for the specific case where the color 2 is prohibited.  

\subsubsection{Compositions prohibiting colors $1,2,\ldots,d$}
Analogous to Proposition \ref{prop:allow1d}, the next proposition gives a (simpler) counting formula.  We also provide a connection to certain regular compositions with some notes on the case $d=1$, as usual.  First, we recall a result on the enumeration of $n$-color compositions by number of parts.

\begin{theorem} \label{ccompparts}
The number of $n$-color compositions of $n$ with $m$ parts is $\binom{n+m-1}{2m-1}$.
\end{theorem}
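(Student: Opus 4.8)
The plan is to prove Theorem \ref{ccompparts} via a direct combinatorial bijection using spotted tilings. An $n$-color composition of $n$ with $m$ parts corresponds to a spotted tiling of a $1 \times n$ rectangle into $m$ tiles, each carrying one spot. So I want to show that such configurations are counted by $\binom{n+m-1}{2m-1}$.

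First I would encode the data of a spotted tiling with $m$ parts. The tiling is determined by the sequence of part sizes $(\kappa_1, \ldots, \kappa_m)$ with $\kappa_1 + \cdots + \kappa_m = n$, together with a choice of spotted square within each tile, i.e., a color $c_j$ with $1 \le c_j \le \kappa_j$. Using the $c$-block language from the Definition, each tile $\kappa_j$ splits into a $c_j$-block of length $c_j$ (ending in the spot) followed by a tail of length $\kappa_j - c_j \ge 0$. Thus a spotted tiling with $m$ parts is equivalent to a choice of $m$ positive integers $c_1, \ldots, c_m$ (the block lengths) and $m$ nonnegative integers $t_1, \ldots, t_m$ (the tail lengths) with $\sum (c_j + t_j) = n$.

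Next I would count these solutions. Setting $c_j = c_j' + 1$ with $c_j' \ge 0$, I need the number of solutions of $\sum_{j=1}^m c_j' + \sum_{j=1}^m t_j = n - m$ in $2m$ nonnegative integer unknowns, which is the stars-and-bars count $\binom{(n-m) + 2m - 1}{2m-1} = \binom{n+m-1}{2m-1}$. This is exactly the claimed formula, so the argument is complete once the encoding is justified.

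The main obstacle — really the only thing requiring care — is verifying that the map from spotted tilings to the tuple $(c_1, t_1, \ldots, c_m, t_m)$ is a genuine bijection: that reading off block and tail lengths from left to right recovers the tiling uniquely, and conversely that any such tuple with positive $c_j$ reassembles into a valid spotted tiling (concatenate the $c_j$-block, placing the spot on its last square, then the tail of $t_j$ empty squares, for $j = 1, \ldots, m$ in order). Since the $c_j$ are positive, every tile is nonempty and the spot position $c_j \le c_j + t_j = \kappa_j$ is legitimate, so the reconstruction always yields an honest $n$-color composition with $m$ parts. I would also remark that this recovers the known row sums: summing $\binom{n+m-1}{2m-1}$ over $m \ge 1$ gives the total number of $n$-color compositions of $n$, consistent with the other enumerations in the paper.
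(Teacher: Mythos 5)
Your proof is correct. Note that the paper does not actually prove Theorem \ref{ccompparts} itself; it only cites Agarwal \cite{a} (generating-function proof) and Hopkins \cite{h} (combinatorial proof via spotted tilings). Your argument---splitting each tile into its $c$-block of length $c_j \ge 1$ and its tail of length $t_j \ge 0$, then counting solutions of $\sum_j (c_j - 1) + \sum_j t_j = n-m$ in $2m$ nonnegative integers by stars and bars to get $\binom{n+m-1}{2m-1}$---is a complete, self-contained version of essentially that same spotted-tiling argument (and is equivalent to extracting the coefficient of $q^n$ from $\left( q/(1-q)^2 \right)^m$), so it fits the paper's framework exactly; the bijectivity check you flag is indeed the only point needing care, and your verification of it is adequate.
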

See \cite[Thm. 1]{a} for the original proof and \cite[Thm. 4]{h} for a combinatorial proof.

\begin{proposition} 
Given $d \ge 1$, the number of $n$-color compositions of $n$ prohibiting colors $1,2,\ldots,d$ is given by
\[a(n) = 2a(n-1) - a(n-2) + a(n-d-1)\] 
with $a(0) = 1$ and $a(1) = \cdots = a(d) = 0$.  Also,
\[a(n) = \sum_{m=1}^{\lfloor n/(d+1) \rfloor } {n-(d-1)m-1 \choose 2m-1} .\]
\end{proposition}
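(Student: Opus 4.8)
The recurrence is immediate: applying Theorem~\ref{prohibited} with prohibited colors $\{d^i\} = \{1,2,\ldots,d\}$ gives sequential forbidden colors $1,\ldots,d$, so by the cancellation remark following that theorem (with $k=1$, forbidden colors $1,\ldots,1+(d-1)$) the recurrence collapses to $a(n) = 3a(n-1) - a(n-2) - a(n-1) + a(n-d-1) = 2a(n-1) - a(n-2) + a(n-d-1)$. For the initial values, prohibiting $\{1,\ldots,d\}$ among the colors $\{1,\ldots,d\}$ leaves no allowed colors, so Theorem~\ref{prohibited} gives $a(0)=1$, $a(1) = \cdots = a(d) = 0$, exactly as stated.

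The substantive part is the closed formula. The plan is to classify the $n$-color compositions being counted by their number of parts $m$ and apply Theorem~\ref{ccompparts}. Since every part $\kappa_c$ must use a color $c \geq d+1$, and the color is at most the part size, every part has size at least $d+1$; hence an $n$-color composition of $n$ of the desired type with $m$ parts can have at most $\lfloor n/(d+1)\rfloor$ parts, which accounts for the upper limit of the sum. The idea is to set up a size-preserving bijection between these restricted $m$-part $n$-color compositions of $n$ and \emph{all} $m$-part $n'$-color compositions of some smaller integer $n'$, where the restriction ``every color is at least $d+1$'' has been absorbed. Concretely, for each part $\kappa_c$ with $c \geq d+1$, I would strip off the $d$ forced empty squares that precede the spot (the first $d$ squares of its $c$-block are forbidden territory under the color restriction), replacing $\kappa_c$ by $(\kappa-d)_{c-d}$, which is a legitimate part since $c-d \geq 1$ and $\kappa - d \geq c - d \geq 1$. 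Doing this to all $m$ parts removes exactly $dm$ squares, producing an (unrestricted) $n$-color composition of $n - dm$ with $m$ parts. This map is reversible: given any $m$-part $n$-color composition of $n-dm$, add $d$ empty squares before the spot in each part. By Theorem~\ref{ccompparts}, the number of $m$-part $n$-color compositions of $n-dm$ is $\binom{(n-dm)+m-1}{2m-1} = \binom{n-(d-1)m-1}{2m-1}$. Summing over $m$ from $1$ to $\lfloor n/(d+1)\rfloor$ (the binomial coefficient vanishes for larger $m$ anyway, since then $n - dm < m$, forcing $n - (d-1)m - 1 < 2m - 1$) yields the claimed formula.

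The main obstacle I anticipate is purely bookkeeping: verifying that the shift $\kappa_c \mapsto (\kappa-d)_{c-d}$ is a genuine bijection onto all $m$-part $n$-color compositions of $n-dm$ — in particular that it is well-defined (the resulting parts and colors are positive) and surjective (every target composition, including those with small parts or small colors, arises), and that the arithmetic $\binom{n-dm+m-1}{2m-1} = \binom{n-(d-1)m-1}{2m-1}$ is carried out correctly. One should also confirm the index range: the summand is zero unless $2m-1 \leq n-(d-1)m-1$, i.e.\ $(d+1)m \leq n$, so extending the sum past $\lfloor n/(d+1)\rfloor$ contributes nothing, which reconciles the stated upper limit with the binomial-coefficient convention. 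For $d=1$ one should note as a sanity check that the formula reduces to $\sum_{m=1}^{\lfloor n/2\rfloor}\binom{n-1}{2m-1}$, counting $n$-color compositions of $n$ with no part of color $1$, and the recurrence becomes $a(n) = 2a(n-1) - a(n-2) + a(n-2) = 2a(n-1)$, consistent with $a(n) = 2^{n-1}a(1)$ shifted appropriately by the initial conditions.
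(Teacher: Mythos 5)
Your proposal is correct and follows essentially the same route as the paper: the recurrence from Theorem \ref{prohibited} (the paper simply cites it, while you invoke the cancellation remark for consecutive forbidden colors, which amounts to the same thing), and for the closed formula the same bijection that strips the $d$ forced empty squares from each of the $m$ parts to reduce to unrestricted $m$-part $n$-color compositions of $n-dm$, counted by Theorem \ref{ccompparts}. No gaps; your extra checks on well-definedness and the vanishing of out-of-range summands only make explicit what the paper leaves implicit.
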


\begin{proof}
The recurrence follows directly from Theorem \ref{prohibited}.

For the direct formula, we determine the number of desired length $n$ compositions having $m$ parts.  Note that $m$ can range from 1 to $\lfloor n/(d+1) \rfloor$ since each part must be at least $d+1$.

With the color prohibition, each part $\kappa_c$ begins with $d$ empty squares.  Disregarding these leaves $(\kappa-d)_{c-d}$ where the color $c-d$ is restricted only by the part length.  Together, ignoring the $dm$ necessarily empty squares of a composition of $n$ with colors $1, \ldots, d$ prohibited leaves an unrestricted $n$-color composition of $n-dm$ with $m$ parts.  By Theorem \ref{ccompparts}, there are $\binom{n-dm+m-1}{2m-1}$ of these.  Summing over possible $m$ values gives the formula.
\end{proof} 

It is worth mentioning that the $d=1$ case has recurrence $a(n) = 2a(n-1)$ and direct formula
\[\binom{n-1}{1} + \binom{n-1}{3} + \cdots = \binom{n-2}{0}+\binom{n-2}{1} + \binom{n-2}{2} + \binom{n-2}{3} + \cdots = 2^{n-2}\]
by the binomial theorem.  See also the notes for the $d=1$ case of the next proposition.

\begin{proposition} \label{prohibbij}
There is a bijection between
\begin{enumerate}
\item[(i.)] $n$-color compositions of $n+d$ prohibiting colors $1,2,\ldots,d$ and 
\item[(ii.)] regular compositions of $n$ with parts congruent to $1,2,\ldots,d \bmod 2d$ where any part $i$ for $2\leq i \leq d-1$ must either be the first part or be followed by an odd number of parts $d$.
\end{enumerate}
\end{proposition}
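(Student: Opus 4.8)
The plan is to build an explicit bijection on spotted tilings. Prohibiting the colors $1,\dots,d$ means exactly that every tile $\kappa_c$ has its spot in square $c$ with $c\ge d+1$, i.e.\ each spot is preceded by at least $d$ empty squares inside its tile; so an object of type (i) is precisely an $n$-color composition of $n+d$ in which every $c$-block begins with $d$ "mandatory" empty squares. The drop from $n+d$ to $n$ will be absorbed by \emph{deleting the first $d$ empty squares of the very first tile}, which replaces $\kappa_c$ by $(\kappa-d)_{c-d}$, now a tile of arbitrary color, while all later tiles keep colors $\ge d+1$.

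Next I would translate the resulting tiling into a regular composition of $n$ tile by tile, concatenating the pieces in order. For a tile (after the modification of the first one) write $c-1=kd+s$ with $0\le s\le d-1$, so its $c$-block is $k$ consecutive "$d$-blocks" of empty squares followed by the $(s+1)$-block (the last $s$ empty squares and the spot), and a tail of $\kappa-c$ empty squares follows. I would emit, in this order: one part of size $2d\lfloor k/2\rfloor+(s+1)$ — so its class mod $2d$ is the offset $s+1\in\{1,\dots,d\}$ and $\lfloor k/2\rfloor$ pairs of $d$-blocks have been absorbed as the "level"; then, if $k$ is odd, one part equal to $d$ (the leftover $d$-block); then $\kappa-c$ parts equal to $1$ (the tail). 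Every emitted part lies in the classes $1,\dots,d\pmod{2d}$. The crucial observation is that the offset part has a \emph{small} value $s+1\in\{2,\dots,d-1\}$ only when $\lfloor k/2\rfloor=0$, i.e.\ $k\in\{0,1\}$: if $k=0$ this can only be the modified first tile and the offset part is then literally the first part of the whole composition; if $k=1$ the offset part is immediately followed by its leftover $d$, followed by $0$ or $2$ further parts $d$ from each subsequent "maximal" tile (the only non-first tiles whose piece begins with a $d$ contribute exactly two), so the run of $d$'s directly after it has odd length. Hence the output is a valid composition of type (ii).

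For the inverse I would parse a type-(ii) composition from the left. The first part is always the offset part of the first tile; in general each tile's piece is one offset part $2d\ell+j$ ($1\le j\le d$), an optional part $d$, and a run of $1$'s, and the number of offset parts equals the number of tiles. The parity condition is exactly what disambiguates the parsing: if a part $i$ with $2\le i\le d-1$ is not first it is followed by an odd run of $d$'s, whose \emph{first} member is the leftover belonging to $i$'s tile and whose remaining members come in pairs heading later tiles, so one sees where $i$'s tile ends; the borderline offsets $j=1$ and $j=d$, and the special first tile, are handled by the same book-keeping. From each offset part and its optional trailing $d$ one reads off $k$ and $s$, hence the color $c$ and (with the run of $1$'s) the size $\kappa$; finally one returns to the first tile the $d$-block removed at the start.

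The main obstacle I anticipate is precisely this book-keeping around the conditioned offsets $\{2,\dots,d-1\}$: one must verify carefully that the forward map never "strands" such a part — it is always either globally first (from the peeled first tile) or carries an odd run of $d$'s — and that the parsing in the inverse is genuinely well defined at every tile boundary, which forces separate treatment of the first tile and of the cases $j=1$ and $j=d$. The instance $d=1$ is degenerate (there are no conditioned parts and the construction above collapses, as "$d$"-parts and "$1$"-parts become indistinguishable), so it is dealt with separately, as is customary for the $d=1$ cases throughout this section.
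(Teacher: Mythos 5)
Your forward map coincides with the paper's: peel the $d$ forced empty squares off the first tile, then tile by tile emit the color as a part, split off a trailing $d$ exactly when the color lies in the upper residue range (your $k$ odd case), and turn the tail into parts $1$; your argument that a part of size $2,\dots,d-1$ is either the first part or followed by an odd run of $d$'s (its own leftover plus pairs coming from consecutive color-$2d$ tiles with empty tails) is also the argument the paper gives. The gap is in the inverse. Your assertion that ``the parity condition is exactly what disambiguates the parsing'' is not correct as stated: the condition in (ii) constrains only parts of size $2,\dots,d-1$, while the real parsing ambiguities involve parts $1$ and $d$ and the first part, which that condition does not touch. For example, with $d=3$, in $(7,1,3,1)$ the part $1$ must be read as the offset of a color-$4$ tile followed by its leftover $3$, whereas in $(7,1,3,3,1)$ the same local pattern $1,3$ must be read as a tail square followed by a color-$6$ tile; what decides between these is the parity of the maximal run of $d$'s, visible only by lookahead. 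You never show that this lookahead always produces one and only one parse -- indeed you list precisely this well-definedness as your anticipated obstacle -- and that uniqueness is the substance of injectivity, so as written the bijection is not established.

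The paper closes exactly this hole by formulating the inverse so that no parsing decision is ever made: scan right to left and absorb each part $d$ into its left neighbor (so the $d$'s in every maximal run pair up from the right, an odd leftover attaching to the preceding part), add $d$ to the leftmost part, and then merge each run of $1$'s with the larger part to its left into a colored part. The hypothesis on parts $2,\dots,d-1$ then guarantees the intermediate composition has every part equal to $1$ or greater than $d$ with first part not $1$, and no separate treatment of $j=1$, $j=d$, or the first tile is needed. Supplying either that absorption rule or a uniqueness-of-parse lemma for your left-to-right scheme would complete your proof; your remark that $d=1$ must be handled separately agrees with the paper's treatment.
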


\begin{proof}
Apply the following map to the spotted tiling of an $n$-color composition of $n+d$ with colors $1,2,\ldots,d$ prohibited.
\begin{enumerate}
\item Remove the first $d$ squares (necessarily empty), changing the first part $\kappa_c$ to $(\kappa-d)_{c-d}$.
\item Convert each part $\kappa_c$ to $c$ followed by $k-c$ parts 1 in the regular composition, i.e., the $c$-block becomes a part $c$ and each square of the tail becomes a part 1.
\item In the resulting regular composition, replace any part $j$ such that $d+1\leq j \leq 2d \bmod 2d$ by the parts $j-d, d$.
\end{enumerate}

Note that step (1) makes a composition with sum $n$ and step (3) guarantees that each part is congruent to $1,2,\ldots,d \bmod 2d$. In the resulting regular composition, the only time a part $i$ with $2\leq i \leq d-1$ can be generated is either from the conversion of the shortened part in (1), in which case $i$ is the first part,  or from $j-d$ in step (3), in which case it is followed by a $d$.  The only way for $i$ to be followed by additional parts $d$ is subsequent applications of (3) to $2d$ which each yield two parts $d$, thus $i$ is followed by an odd number of parts $d$.  

For the reverse map, begin with a specified regular composition of $n$. 

\begin{enumerate}
\item Working from right to left, every time we have a part $d$, we add it to the next part to the left  $\ell$ to make a part $\ell + d$.  Also, change the leftmost part $k$ to $k+d$.
\item Starting again from the right, merge runs of parts 1 (possibly empty) with the larger part to the left into $n$-colored parts.  Specifically, a length $b$ run of parts 1 and the part $d+\ell$ to its left becomes the part $(\ell+d+b)_{\ell+d}$, that is, a $(\ell+d)$-block with tail length $b$.
\end{enumerate}

By the restriction on the compositions given in the proposition statement, step (1) produces compositions of $n+d$ with each part 1 or greater than $d$ whose first part is not 1.  Step (2) then converts these into $n$-color compositions with colors $1, 2, \ldots, d$ prohibited.  See Figure 5
%\ref{fig:bij8} 
for an example of the bijection.
\end{proof}

\begin{figure}[h]
\setlength{\unitlength}{.4cm}
\begin{picture}(18,4)%(-5,0.5)
\thicklines

\put(0,3){\line(1,0){18}}
\put(0,4){\line(1,0){18}}
\put(0,3){\line(0,1){1}}
\put(3.5,3.5){\circle*{.5}}
\put(5,3){\line(0,1){1}}
\put(7.5,3.5){\circle*{.5}}
\put(8,4){\line(0,-1){1}}
\put(11.5,3.5){\circle*{.5}}
\put(12,4){\line(0,-1){1}}
\put(15.5,3.5){\circle*{.5}}
\put(18,4){\line(0,-1){1}}

\put(2,1){\line(1,0){16}}
\put(2,2){\line(1,0){16}}
\put(2,1){\line(0,1){1}}
\put(4,1){\line(0,1){1}}
\put(5,1){\line(0,1){1}}
\put(6,1){\line(0,1){1}}
\put(8,1){\line(0,1){1}}
\put(10,1){\line(0,1){1}}
\put(12,1){\line(0,1){1}}
\put(16,1){\line(0,1){1}}
\put(17,1){\line(0,1){1}}
\put(18,1){\line(0,1){1}}

\end{picture}
\caption{An example of the bijection in Proposition \ref{prohibbij} for $n=16$ and $d=2$, which associates the spotted tiling of the $n$-colored composition $(5_3, 3_3, 4_4,6_4)$ and the regular composition tiling for $(2,1,2,2,2,4,1,1)$ below it.}\label{fig:bij1c}
\label{fig:bij8}
\end{figure}
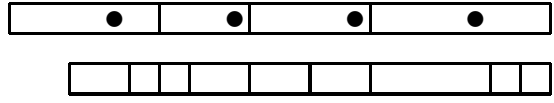

In the case of $d=1$, the restriction about parts $i$ in the regular compositions is vacuous; the bijection is between $n$-color compositions of length $n+1$ with color 1 prohibited and all regular compositions of length $n$.  In the first mapping, step (3) does not arise.  In the reverse map, ignore step (1) except to add an initial part 1 to the composition.  See Figure \ref{fig:d1bij} for the complete $n=3$, $d=1$ case.

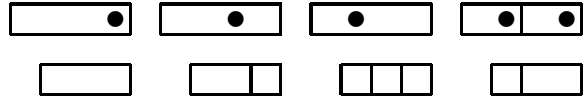
\begin{figure}[h]
\setlength{\unitlength}{.4cm}
\begin{picture}(19,4)%(-5,0.5)
\thicklines

\put(0,3){\line(1,0){4}}
\put(0,4){\line(1,0){4}}
\put(0,3){\line(0,1){1}}
\put(3.5,3.5){\circle*{.5}}
\put(4,3){\line(0,1){1}}

\put(1,1){\line(1,0){3}}
\put(1,2){\line(1,0){3}}
\put(1,1){\line(0,1){1}}
\put(4,1){\line(0,1){1}}

\put(5,3){\line(1,0){4}}
\put(5,4){\line(1,0){4}}
\put(5,3){\line(0,1){1}}
\put(7.5,3.5){\circle*{.5}}
\put(9,3){\line(0,1){1}}

\put(6,1){\line(1,0){3}}
\put(6,2){\line(1,0){3}}
\put(6,1){\line(0,1){1}}
\put(8,1){\line(0,1){1}}
\put(9,1){\line(0,1){1}}

\put(10,3){\line(1,0){4}}
\put(10,4){\line(1,0){4}}
\put(10,3){\line(0,1){1}}
\put(11.5,3.5){\circle*{.5}}
\put(14,3){\line(0,1){1}}

\put(11,1){\line(1,0){3}}
\put(11,2){\line(1,0){3}}
\put(11,1){\line(0,1){1}}
\put(12,1){\line(0,1){1}}
\put(13,1){\line(0,1){1}}
\put(14,1){\line(0,1){1}}

\put(15,3){\line(1,0){4}}
\put(15,4){\line(1,0){4}}
\put(15,3){\line(0,1){1}}
\put(16.5,3.5){\circle*{.5}}
\put(17,3){\line(0,1){1}}
\put(18.5,3.5){\circle*{.5}}
\put(19,3){\line(0,1){1}}

\put(16,1){\line(1,0){3}}
\put(16,2){\line(1,0){3}}
\put(16,1){\line(0,1){1}}
\put(17,1){\line(0,1){1}}
\put(19,1){\line(0,1){1}}

\end{picture}
\caption{The complete bijection in Proposition \ref{prohibbij} for $n=3$ and $d=1$, which associates the spotted tilings of $(4_4)$, $(4_3)$, $(4_2)$, $(2_2,2_2)$ with the regular tilings of $(3)$, $(2,1)$, $(1,1,1)$, $(1,2)$, respectively.}\label{fig:d1bij}
\label{fig:bij8}
\end{figure}

\subsubsection{Prohibiting color 2}

We conclude this subsection with an illustration of the specific results that can be found for particular restrictions, here, prohibiting the color 2.  See \cite{hw} for further examples of this type.  We begin with enumeration results, including a combinatorial proof for a formula listed in \cite[A034943]{oeis}.

\begin{proposition} \label{no2enum}
The number of $n$-color compositions of $n$ prohibiting color 2 is given by
\[a(n) = 3a(n-1) - 2a(n-2) + a(n-3)\]
with $a_0 = a_1 = 1$ and $a_2 = 2$.  Also,
$$ \sum_{k=0}^{\lfloor n/2 \rfloor} {n+k \choose 3k} .$$
\end{proposition}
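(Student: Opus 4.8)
The recurrence is immediate from Theorem~\ref{prohibited} with the single prohibited color $d^1=2$: the sum $\sum_i\bigl(-a(n-d^i)+a(n-d^i-1)\bigr)$ becomes $-a(n-2)+a(n-3)$, which combines with the $-a(n-2)$ already present to give the stated relation. The initial values come from Theorem~\ref{allowed} applied to the remaining allowed color $\{1\}$, i.e., from ordinary compositions, giving $a(0)=a(1)=1$ and $a(2)=2$.

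For the closed formula, the plan is to count these compositions by the number $k$ of parts of size at least $2$ and to show that there are exactly $\binom{n+k}{3k}$ of them for each $k$, then sum over $0\le k\le\lfloor n/2\rfloor$ (the range forced since each such part contributes at least $2$ to $n$). Call a part \emph{big} if its size is at least $2$; note that, with color $2$ forbidden, a big part of size $\kappa$ may carry any of the $\kappa-1$ colors in $\{1,3,4,\dots,\kappa\}$, whereas the only part of size $1$ is $1_1$. A composition with exactly $k$ big parts is determined by the ordered list of big parts $B_1,\dots,B_k$ together with the numbers $g_0,g_1,\dots,g_k\ge0$ of copies of $1_1$ placed in the $k+1$ gaps before, between, and after them.

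The key step is to encode each big part by an ordered pair of nonnegative integers. Writing $B_i=\kappa_{i,c_i}$, set $u_i=0$ if $c_i=1$ and $u_i=c_i-2$ if $c_i\ge3$; since $c_i$ ranges over the $\kappa_i-1$ allowed colors, $u_i$ ranges bijectively over $\{0,1,\dots,\kappa_i-2\}$, and we record the pair $(u_i,\kappa_i-2-u_i)$, whose coordinates are nonnegative (as $c_i\le\kappa_i$) and sum to the excess $\kappa_i-2$. Running over $i$ produces $2k$ nonnegative integers; appending $g_0,\dots,g_k$ yields a $(3k+1)$-tuple of nonnegative integers whose sum is $\sum_i(\kappa_i-2)+\sum_j g_j=n-2k$. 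This correspondence is reversible: the length of the tuple tells us $k$, each pair $(u_i,v_i)$ reconstructs a big part of size $u_i+v_i+2$ with color $1$ if $u_i=0$ and $u_i+2$ otherwise (always a valid, color-$2$-avoiding part), and the $g_j$ reconstruct the gaps. Hence the compositions with exactly $k$ big parts are in bijection with the weak compositions of $n-2k$ into $3k+1$ parts, of which there are $\binom{(n-2k)+(3k+1)-1}{(3k+1)-1}=\binom{n+k}{3k}$, and summing over $k$ gives the formula.

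I expect the only real obstacle to be the per–big-part encoding: the two shapes of big parts behave differently (a color-$1$ big part has its spot at the front and only a tail to vary, whereas a color $\ge3$ big part has empty squares both before and after its spot), so one must check carefully that the shift $c\mapsto c-2$, with the value $0$ reserved for color $1$, genuinely merges both families into a single clean listing of $\{0,1,\dots,\kappa-2\}$; once that is verified the remainder is bookkeeping with stars and bars. As a sanity check (or an alternative, less desirable route) one can instead verify that $\sum_k\binom{n+k}{3k}$ satisfies the same order-$3$ recurrence and initial values via the generating function identity $\frac{1}{1-x}\sum_{k\ge0}\bigl(\tfrac{x^2}{(1-x)^3}\bigr)^k=\frac{(1-x)^2}{1-3x+2x^2-x^3}$, but the combinatorial argument above is the one to present.
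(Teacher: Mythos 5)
Your proof is correct, and it follows the paper's overall strategy — stratify by the number $k$ of parts of size at least $2$ and show that each stratum has cardinality $\binom{n+k}{3k}$, with the recurrence and initial values pulled from Theorems \ref{prohibited} and \ref{allowed} exactly as the paper does — but you realize the key count differently. The paper builds an explicit bijection between the compositions with $k$ big parts and $1\times(n+k)$ rectangles carrying $3k$ marked squares: each big part is encoded by a triple of marked positions, with a case split (middle mark adjacent to the first mark encodes color $1$; otherwise the color is the offset $a_{3i-1}-a_{3i-2}+1$, which automatically skips $2$) and one square deleted per big part to shrink $n+k$ back to $n$. You instead encode each big part as an ordered pair of nonnegative integers via the shift $c\mapsto c-2$ with $0$ reserved for color $1$, and record the runs of $1_1$ as $k+1$ gap sizes, so the stratum becomes the set of weak compositions of $n-2k$ into $3k+1$ parts, counted by stars and bars. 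The two models are equivalent (gaps between consecutive marked squares correspond to the entries of the weak composition), but your encoding absorbs the color-$2$ exclusion into a single clean relabeling and makes invertibility immediate, at the cost of losing the paper's directly pictorial subset bijection on spotted tilings; your verification of $k\le\lfloor n/2\rfloor$ and of the recurrence/initial data is the same as the paper's.
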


\begin{proof}
The recurrence follows directly from Theorem \ref{prohibited}.

For the direct formula, we show that there are $ {n+k \choose 3k}$ compositions of $n$ prohibiting color 2 that have $k$ parts greater than 1.  Summing over possible values of $k$, from $0$ to $\lfloor $n/2$ \rfloor$, will establish the formula.
 
Here is a bijection between the color compositions of $n$ prohibiting color 2 and with $k$ greater than 1, and $1\times (n+k)$ rectangles with $3k$ marked squares.

Given a $1\times (n+k)$ rectangle, label its squares $1, 2, \ldots , n+k$ with marked squares in positions $a_1 < a_2 <  \cdots < a_{3k}$.
For $i=1, \ldots, k$, form a spotted tile from the squares $a_{3i-2}, \ldots, a_{3i-1}, \ldots, a_{3i}$
as follows.
\begin{enumerate}
\item Let $a_{3i-1}$ be the spotted square.
\item[2a.] If $a_{3i-1}=a_{3i-2}+1$, remove the square $a_{3i-2}$.
\item[2b.] If $a_{3i-1}\neq a_{3i-2}+1$, remove the square $a_{3i}$.
\item[3.] Map any square not in $\{ a_{3i-2}, \ldots, a_{3i-1}, \ldots, a_{3i} \}$ for any $i$ to a spotted tile corresponding to $1_1$.
\end{enumerate}
Steps 1 and 2 create a spotted tile corresponding to a color composition part of length $a_{3i} - a_{3i-2}$ (which is at least 2) with color $a_{3i-1} - a_{3i-2}+1$ unless that would give a 2, in which case the color is 1.  Removing $k$ squares and erasing marked squares that did not become spots leaves a desired color composition of $n$.  See Figure \ref{fig:bij_n1} for an example.

For the reverse map, given a color composition of $n$ prohibiting color 2 with $k$ parts greater than 1, we modify each of those $k$ parts.
\begin{enumerate}
\item[1a.] Change $\kappa_1$ with $\kappa \ge 2$ to $(\kappa+1)_2$.  
\item[1b.] Change $\kappa_c$ with $\kappa \ge 2$ and $c \ge 2$ to $(\kappa+1)_c$.
\item[2.] Mark the first square, spotted square, and last square of these resulting $k$ parts.
\item[3.] Erase the spots of any parts $1_1$.
\item[4.] Erase any lines separating parts.
\end{enumerate}
Note that step 1 produces parts with no color 1 and with each part having a positive length tail (i.e., no last square is spotted).  Extending each of these $k$ parts by one gives a color composition of $n+k$.  Also, for each of these $k$ parts, step 2 described three distinct squares.  Steps 2 through 4 give a $1 \times (n+k)$ rectangle with $3k$ marked squares.
\end{proof}

\begin{figure}[t]
\setlength{\unitlength}{.4cm}
\begin{picture}(14,10)(-3,0.5)
\thicklines

\put(-6,9){\line(1,0){20}}
\put(-6,10){\line(1,0){20}}
\put(-4.5,9.5){\circle{.5}}
\put(-6,9){\line(0,1){1}}
\put(-5.5,9.5){\circle{.5}}
\put(-1.5,9.5){\circle{.5}}
\put(.5,9.5){\circle{.5}}
\put(2.5,9.5){\circle{.5}}
\put(3.5,9.5){\circle{.5}}
\put(5.5,9.5){\circle{.5}}
\put(6.5,9.5){\circle{.5}}
\put(8.5,9.5){\circle{.5}}
\put(9.5,9.5){\circle{.5}}
\put(11.5,9.5){\circle{.5}}
\put(13.5,9.5){\circle{.5}}
\put(14,9){\line(0,1){1}}

\put(-6,7){\line(1,0){20}}
\put(-6,8){\line(1,0){20}}
\put(-4.5,7.5){\circle{.5}}
\put(-6,7){\line(0,1){1}}
\put(-5.5,7.5){\circle{.5}}
\put(-1.5,7.5){\circle{.5}}
\put(-1,7){\line(0,1){1}}
\put(-.5,7.5){\circle*{.5}}
\put(0,7){\line(0,1){1}}
\put(.5,7.5){\circle{.5}}
\put(2.5,7.5){\circle{.5}}
\put(3.5,7.5){\circle{.5}}
\put(4,7){\line(0,1){1}}
\put(4.5,7.5){\circle*{.5}}
\put(5,7){\line(0,1){1}}
\put(5.5,7.5){\circle{.5}}
\put(6.5,7.5){\circle{.5}}
\put(9,8){\line(0,-1){1}}
\put(8.5,7.5){\circle{.5}}
\put(9.5,7.5){\circle{.5}}
\put(11.5,7.5){\circle{.5}}
\put(13.5,7.5){\circle{.5}}
\put(14,7){\line(0,1){1}}

\put(-6,5){\line(1,0){20}}
\put(-6,6){\line(1,0){20}}
\put(-4.5,5.5){\circle*{.5}}
\put(-6,5){\line(0,1){1}}
\put(-5.5,5.5){\circle{.5}}
\put(-1.5,5.5){\circle{.5}}
\put(-1,5){\line(0,1){1}}
\put(-.5,5.5){\circle*{.5}}
\put(0,5){\line(0,1){1}}
\put(.5,5.5){\circle{.5}}
\put(2.5,5.5){\circle*{.5}}
\put(3.5,5.5){\circle{.5}}
\put(4,5){\line(0,1){1}}
\put(4.5,5.5){\circle*{.5}}
\put(5,5){\line(0,1){1}}
\put(5.5,5.5){\circle{.5}}
\put(6.5,5.5){\circle*{.5}}
\put(9,6){\line(0,-1){1}}
\put(8.5,5.5){\circle{.5}}
\put(9.5,5.5){\circle{.5}}
\put(11.5,5.5){\circle*{.5}}
\put(13.5,5.5){\circle{.5}}
\put(14,5){\line(0,1){1}}

\put(-5,3){\line(1,0){8}}
\put(-5,4){\line(1,0){8}}
\put(-4.5,3.5){\circle*{.5}}
\put(-5,3){\line(0,1){1}}
\put(-1,3){\line(0,1){1}}
\put(-.5,3.5){\circle*{.5}}
\put(0,3){\line(0,1){1}}
\put(2.5,3.5){\circle*{.5}}
\put(3,3){\line(0,1){1}}

\put(4,3){\line(1,0){1}}
\put(4,4){\line(1,0){1}}
\put(4,3){\line(0,1){1}}
\put(4.5,3.5){\circle*{.5}}
\put(5,3){\line(0,1){1}}

\put(6,3){\line(0,1){1}}
\put(6,3){\line(1,0){7}}
\put(6,4){\line(1,0){7}}
\put(6.5,3.5){\circle*{.5}}
\put(9,4){\line(0,-1){1}}
\put(11.5,3.5){\circle*{.5}}
\put(13,3){\line(0,1){1}}

\put(-4,1){\line(1,0){16}}
\put(-4,2){\line(1,0){16}}
\put(-4,1){\line(0,1){1}}
\put(-3.5,1.5){\circle*{.5}}
\put(0,1){\line(0,1){1}}
\put(1,1){\line(0,1){1}}
\put(.5,1.5){\circle*{.5}}
\put(4,1){\line(0,1){1}}
\put(4.5,1.5){\circle*{.5}}
\put(5,1){\line(0,1){1}}
\put(3.5,1.5){\circle*{.5}}
\put(8,2){\line(0,-1){1}}
\put(12,2){\line(0,-1){1}}
\put(5.5,1.5){\circle*{.5}}
\put(10.5,1.5){\circle*{.5}}

\end{picture}
\caption{An example of the bijection of Proposition \ref{no2enum}.  The $1 \times 20$ rectangle with indicated 12 marked squares corresponds to the color composition $(4_1, 1_1, 3_3,1_1, 3_1,4_3)$ of 16 which has four parts greater than 1.}
\label{fig:bij_n1}
\end{figure}
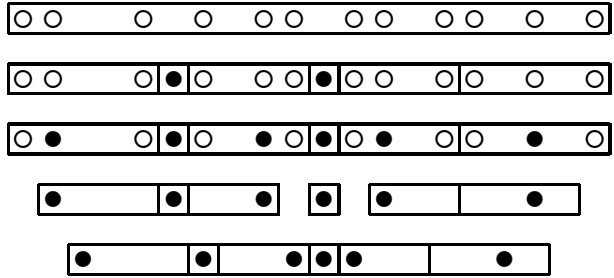

Next, we prove a connection between $n$-color compositions prohibiting the color 2 and certain regular compositions.  To present this bijection, we introduce the notions of {\it left open}, {\it right open},  and {\it open} parts in a composition. A left open part of size $k$, denoted by $\cdot k$, is a part of size $k$ that can be joined with its adjacent part (on the left) if that part is right open or open; the definition for right open part $k\cdot$ or open part $\cdot k \cdot$ are similar. Standard parts may be called closed for clarity.  For example, 
$ (3 \cdot, \cdot 2 \cdot, 4 \cdot, \cdot 3, \cdot 1 \cdot, \cdot 5)$
corresponds to the regular composition $(5,7,6)$ where 5 is a result of $3 \cdot $ and $ \cdot 2 \cdot $, 7 is a result of $4 \cdot $ and $ \cdot 3$, and 6 is a result of $ \cdot 1 \cdot $ and $ \cdot 5$; see Figure \ref{fig:ex_open}.

\begin{figure}[h]
\setlength{\unitlength}{.4cm}
\begin{picture}(21,4)%( -4,0.5)
\thicklines

\put(0,3){\line(1,0){3}}
\put(0,4){\line(1,0){3}}
\put(0,3){\line(0,1){1}}

\put(4,3){\line(1,0){2}}
\put(4,4){\line(1,0){2}}

\put(7,3){\line(1,0){4}}
\put(7,4){\line(1,0){4}}
\put(7,3){\line(0,1){1}}

\put(12,3){\line(1,0){3}}
\put(12,4){\line(1,0){3}}
\put(15,3){\line(0,1){1}}

\put(16,3){\line(1,0){1}}
\put(16,4){\line(1,0){1}}

\put(18,3){\line(1,0){5}}
\put(18,4){\line(1,0){5}}
\put(23,3){\line(0,1){1}}

\put(3,1){\line(1,0){18}}
\put(3,2){\line(1,0){18}}
\put(3,1){\line(0,1){1}}
\put(8,1){\line(0,1){1}}
\put(15,1){\line(0,1){1}}
\put(21,1){\line(0,1){1}}

\end{picture}
\caption{A representation of $(3 \cdot, \cdot 2 \cdot, 4 \cdot, \cdot 3, \cdot 1 \cdot, \cdot 5)$ leading to $(5,7,6)$.}\label{fig:ex_open}
\end{figure}
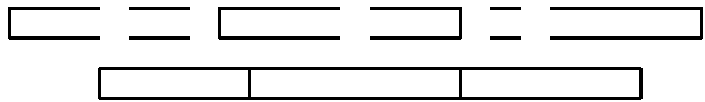

\begin{proposition}
There is a bijection between 
\begin{itemize}
\item[(i.)] $n$-color compositions of $n$ with color 2 prohibited and
\item[(ii.)] regular compositions of $3n+2$ with parts congruent to 2 modulo 3.
\end{itemize}
\end{proposition}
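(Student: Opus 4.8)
The plan is to prove this bijectively, routing through the open-parts machinery just introduced, and to use $3$-to-$1$ scaling of squares together with a single global correction of $+2$. Concretely, given an $n$-color composition of $n$ with color $2$ prohibited, I would process its spotted tiling tile by tile, replace each tile $\kappa_c$ by a short composition-with-open-parts, concatenate these over all tiles, and then merge maximal runs of mutually joinable open parts; the claim to verify is that the result is a regular composition of $3n+2$ all of whose parts are $\equiv 2 \pmod 3$. As a preliminary sanity check one can compare the recurrence of Proposition \ref{no2enum} with the generating function $\frac{1-x^3}{1-x^2-x^3}$ for compositions into parts $\equiv 2 \pmod 3$, sectioned at indices $3n+2$; they agree.

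For the per-tile replacement I would exploit the $c$-block/tail decomposition and the hypothesis $c \neq 2$. In a tile $\kappa_c$, each of the $\kappa-c$ tail squares becomes an open part $\cdot 3 \cdot$ (so it contributes $3$ and joins both neighbors), while the $c$-block --- which, because $c\neq 2$, is either just the spotted square ($c=1$) or at least two empty squares followed by the spot ($c\ge 3$) --- becomes a short run carrying residue $2\bmod 3$ and capped (left-open/right-open/closed as appropriate) so that after merging, each block contains exactly one residue-$2$ element and any number of $\cdot 3 \cdot$'s, hence has size $\equiv 2\pmod 3$. The discrepancy between $3n$ (three times the number of squares) and the desired total $3n+2$ is absorbed by enlarging only the \emph{first} tile's run by a fixed amount $2$ --- the same device used to treat the boundary cases $n=m+1$ in Theorem \ref{modular} and $n=d^++1$ in Theorem \ref{prohibited}, and forced here by the requirement that the empty composition correspond to the one-part composition $(2)$ of $3\cdot 0+2$. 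A short tile-by-tile accounting then gives both that every merged block has size $\equiv 2\pmod 3$ and that the blocks' sizes sum to $3n+2$.

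For the inverse I would start from a regular composition of $3n+2$ with all parts $\equiv 2\pmod 3$, peel each part $3a+2$ --- in the unique way consistent with the markings produced above --- into $a$ copies of $\cdot 3 \cdot$ together with a single residue-carrying piece, concatenate to recover the composition-with-open-parts, and then regroup the pieces into tiles by reversing the per-tile rule: the run of $\cdot 3 \cdot$'s following a residue-$2$ piece gives the tail length, the residue-$2$ piece encodes the color $c$, and the left end is renormalized by undoing the first-tile enlargement. One must check that the peeling and the regrouping are forced at every step, so the two maps are mutually inverse; this is also where $c \ne 2$ earns its keep, since it is exactly the condition that prevents the regrouping from ever demanding a forbidden color.

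The main obstacle I anticipate is precisely this last verification: making the peeling canonical so that the reverse map is genuinely well-defined, and confirming that each target composition is obtained from a unique source, all while keeping the "$+2$"/first-tile normalization consistent with the degenerate cases --- tiles of size $1$ (empty tail), and the $d=1$-type simplifications noted after the three main theorems --- so that no piece is orphaned and no residue is spoiled at a run boundary. Getting the capping rules (which of $\cdot k$, $k\cdot$, $\cdot k\cdot$, or closed $k$ each small part receives) right so that the maximal open runs are exactly the intended blocks is the delicate bookkeeping step; once that is pinned down, the arithmetic identities and the mutual-inverse check are routine.
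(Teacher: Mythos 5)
Your overall strategy (open parts, tripling square counts, a single $+2$ correction) is the same shape as the paper's proof, but the per-tile replacement you sketch cannot be completed, and the difficulty you defer as ``delicate bookkeeping'' is in fact an arithmetic obstruction. A $c$-block tripled contributes $3c \equiv 0 \pmod 3$, so it cannot become ``a short run carrying residue $2 \bmod 3$'' made of one residue-$2$ element plus copies of $\cdot 3\,\cdot$: such a run sums to $2 \bmod 3$, not $0$. Globally the same inconsistency appears in the part count: since every final part is $\equiv 2 \pmod 3$ and the total is $3n+2 \equiv 2 \pmod 3$, the number of final parts is forced to be $\equiv 1 \pmod 3$; but under your rule that each final block contains exactly one residue-$2$ element and each tile supplies exactly one residue-$2$-carrying run, the number of final parts would be the number of tiles (plus at most one for the correction), which is arbitrary modulo $3$. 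So no choice of capping rules can make your allocation work. There is also a placement problem with the correction: a composition ending in $1_1$'s (or, in your scheme, in a nonempty tail) ends in open $3$'s that need a residue-$2$ piece to their \emph{right}; enlarging the first tile by $2$ leaves that trailing run forming a block $\equiv 0 \pmod 3$.

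The paper's resolution is to make each part of size at least $2$ produce \emph{three} residue-$2$ pieces, and the parts $1_1$ (not general tail squares) produce the open $3$'s: $1_1 \mapsto \cdot\, 3\,\cdot$, while $\kappa_1$ ($\kappa \ge 2$) $\mapsto \cdot\, 2,\ 2,\ 3\kappa-4$ and $\kappa_c$ ($c \ge 3$) $\mapsto \cdot\, 2,\ 3c-4,\ 3\kappa-3c+2$, with a final $\cdot\, 2$ appended at the \emph{end} to close any trailing run of open $3$'s. The tail of a tile is thus encoded in the single closed part $3(\kappa-c)+2$, not square by square. This gives exactly $3m+1$ parts when there are $m$ parts of size at least $2$ (consistent with the forced congruence), and the inverse map is canonical because it reads the regular composition positionally in triples $(2,x,y)$ after expanding every $(3k+1)$st part $3a+2$ into $a$ copies of $1_1$ and a marker $2$: the case $x=2$ decodes to color $1$ and $x>2$ to color $(x+4)/3 \ge 3$, which is precisely where the prohibition of color $2$ prevents a clash. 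If you want to salvage your write-up, you should replace your per-tile rule with an encoding of this type (three residue-$2$ pieces per large tile, correction at the right end); as it stands the central step of your plan is not merely unverified but unrealizable.
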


\begin{proof}
Note that the regular compositions described in (ii) must each have $3m+1$ parts for some nonnegative integer $m$ in order for the parts congruent to 2 modulo 3 to sum to $3n+2$.

Given a color composition of $n$ with color 2 prohibited, we triple each part size and build a regular composition as follows.
\begin{enumerate}
\item Change any $1_1$ to $\cdot  3 \cdot$ (an open 3).
\item[2a.] Change any $\kappa_1$ with $\kappa \ge 2$ to $ \cdot 2, 2, 3\kappa-4$ (a left open 2 followed by a closed 2 and a closed $3\kappa-4$).
\item[2b.] Change any $\kappa_c$ with $\kappa \ge 2$ and $c \ge 3$ to $\cdot 2, 3c-4, 3\kappa-3c+2$.
\item[3.] Add $\cdot 2$ at the end of the composition.
\item[4.] Combine any open and left open parts as described in their definition.
\end{enumerate}
There are several characteristics of the map to verify.  

Since each part of the color composition is tripled in length and step 3 adds a length 2 part, the resulting composition sums to $3n+2$.

The parts are all 2 modulo 3: The closed parts generated in step 2 all have that form, and any run of $\cdot 3 \cdot$ terms (possibly empty) combines with a subsequent $\cdot 2$ (guaranteed by step 3) to give a part of length 2 modulo 3.

Finally, for the constraint on the number of parts, suppose that there are $m$ parts in the color composition of length 2 or more.  For each of these $m$ parts, step 2 creates 3 parts: an initial part which is the merger of any previous $\cdot 3 \cdot$ terms with $\cdot 2$ and two closed parts specified by (2a) or (2b).  Step 3 results in one additional part, giving $3m+1$ in total.

For the reverse map, consider a regular composition of $3n+2$ with parts congruent to 2 modulo 3 that has $3m+1$ parts.
\begin{enumerate}
\item For the first part and every third part after that, i.e., for each of the $(3k+1)$st parts for $k=0, \ldots , m$, change it from $3a+2$ to $a$ copies of $1_1$ followed by a 2;
\item Remove the final 2.
\item Every remaining 2 from step 1 is followed by the $(3k+2)$nd and $(3k+3)$rd parts of the regular composition. Write these three consecutive parts as $(2, x, y)$. 
\item[3a.] If $x=2$, change $(2,x,y)$ to $((2+x+y)/3)_1$;
\item[3b.] If $x>2$, change $(2,x,y)$ to $((2+x+y)/3)_ {(x+4)/3}$.
\end{enumerate}
Step 2 is valid since the last part is the $(3m+1)$st which was converted in step 1 to a sequence ending in 2.  The various part lengths and colors in step 3 are all integers since $x, y \equiv 2 \bmod 3$.  

For the composition sums, each application of step 3 replaces $2+x+y$ towards the sum of the regular composition with a part $(2+x+y)/3$ in the color composition, and step 1 takes a $3a$ contribution to the regular composition to $a$ copies of $1_1$, so that the color composition has sum $n$.  

Note that the colors assigned are either 1 or $(x+4)/3$ with $x>2$, i.e., a color 3 or greater.  

Towards seeing that this is the inverse of the previous map, step 3 applies to $m$ triples $(2,x,y)$ so that the resulting color composition has $m$ parts of length 2 or greater.
\end{proof}

For example, the color composition $(2_1,1_1,1_1,3_3,4_4,1_1)$ of 12, with 3 parts 2 or greater, corresponds the regular composition $(2,2,2,8,5,2,2,8,2,5)$ of 38, with $3\cdot 3+1 = 10$ parts, as follows.

\begin{center}
{\renewcommand{\arraystretch}{1.5}
\begin{tabular}{rcl}
$ (2_1,1_1,1_1,3_3,4_4,1_1)$ & $\longrightarrow$ & $(\cdot 2,2,2; \cdot 3 \cdot; \cdot 3 \cdot; \cdot 2,5,2; \cdot 2,8,2; \cdot 3 \cdot; \cdot 2)$ \\
& $\longrightarrow$ & $(2,2,2,8,5,2,2,8,2,5)$ \\ \hline
$(\underline{2},2,2,\underline{8},5,2,\underline{2},8,2,\underline{5})$ & $\longrightarrow$ & $([2,2,2],1_1,1_1,[2, 5, 2], [2, 8, 2], 1_1, 2)$ \\
& $\longrightarrow$ & $(2_1,1_1,1_1,3_3,4_4,1_1)$ 
\end{tabular}}
\end{center}

\subsection{Colors under modular conditions}

Certain cases of modular color conditions have been considered recently.  Odd and even colors are aspects of many combined restrictions in \cite{sa}.  The focus of \cite{s19} is allowing colors within a singular modular class $i \bmod m$.  In this section, we give results that expand on this work.

\subsubsection{Compositions with colors of a single modular congruence}

Most of \cite{s19} considers $n$-color compositions with colors restricted to those of the form $am + i$ for positive $m,i$ and $a \ge 0$.  This is slightly more general than our $i \mod m$ since, for example, $m = 2$ and $i = 5$ gives allowed colors $5, 7, 9, \ldots$ rather than all odd colors.  Acosta et al. give further results for the case $i = m+1$, including a combinatorial proof involving binary words \cite[Thm. 10]{s19}.  We give a related proof for all $i$ in the range $2 \le i \le m$.

\begin{proposition} \label{binseq}
Given a modulus $m \ge 2$ and an $i$ such that $2 \le i \le m$, there is a bijection between 
\begin{itemize}
\item[(i.)] $n$-color compositions of $n$ with colors congruent to $i \bmod m$ and
\item[(ii.)] length $n-1$ binary strings that start with 1 where runs of 1's have length congruent to $i-1 \mod m$.
\end{itemize}
\end{proposition}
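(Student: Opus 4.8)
The natural strategy is to exhibit an explicit bijection at the level of spotted tilings, translating each tile $\kappa_c$ (with $c \equiv i \bmod m$) into a block of a binary string, and to verify that the color-and-length data of the tile is exactly captured by the run-structure requirement in (ii). Since a composition of $n$ decomposes uniquely into consecutive tiles, it suffices to build the string tile by tile and then check that the resulting string is well-defined (length $n-1$, starts with $1$, correct run lengths) and that the process is reversible.

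\textbf{Step 1: the local encoding.} Consider a tile $\kappa_c$ with $c = am + i$ for some $a \ge 0$; recall the $c$-block consists of $c-1$ empty squares followed by the spotted square, and then a tail of $\kappa - c$ empty squares. I would map the $c$-block to a run of $c-1 = am + (i-1)$ ones (note $c - 1 \equiv i-1 \bmod m$, as required), and map the spotted square itself together with the tail to a run of $\kappa - c + 1$ zeros. Concatenating these blocks over all tiles of the composition, in order, produces a binary string; because the first tile's $c$-block begins with $c - 1 \ge i - 1 \ge 1$ ones, the string starts with $1$. A tile of length $\kappa$ contributes $(c-1) + (\kappa - c + 1) = \kappa$ symbols, so the total length is $n$; then I would delete the final symbol, which is always a $0$ (it is the last square of the last tile's tail-or-spot block), leaving a string of length $n-1$ that starts with $1$.

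\textbf{Step 2: the run-length condition and the inverse.} I need to check that every maximal run of $1$'s in the output has length $\equiv i-1 \bmod m$. A run of ones can only come from a $c$-block (the zeros separating tiles, coming from spot-plus-tail segments, are nonempty — each contributes at least one $0$), so every run of ones has length exactly $c - 1$ for some allowed color $c$, hence $\equiv i - 1 \bmod m$. Conversely, given a string as in (ii), I would parse it from the left: read a maximal run of $1$'s of length $r$ (so $r \equiv i-1 \bmod m$, say $r = am + (i-1)$), then read the following maximal run of $0$'s (of some length $s \ge 1$), and reconstruct the tile $\kappa_c$ with $c = am + i = r+1$ and $\kappa = c + (s-1) = r + s$; at the very end append one extra $0$ to the string first so that the final tile's zero-run has the right length. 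Uniqueness of the run decomposition of a binary string makes this inverse well-defined, and it visibly undoes Step 1.

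\textbf{Expected obstacle.} The delicate points are boundary bookkeeping rather than anything deep: making sure the trailing $0$ that is deleted (resp. re-appended) is handled consistently so the length is exactly $n-1$ and the map is a genuine involution-free bijection, and confirming that consecutive tiles always produce a genuine run boundary — i.e. that a zero-run is never empty (true since each tile contributes its spot square as a $0$) and that two adjacent $c$-blocks can never merge into one longer run of ones (prevented by that same intervening $0$). One should also note why (ii) requires $i \le m$: for $i$ in this range the congruence class $i \bmod m$ restricted to positive colors is exactly $\{i, i+m, i+2m,\dots\}$, matching the "$am+i$ with $a \ge 0$" description, so the correspondence $c \leftrightarrow r = c-1$ between allowed colors and allowed run lengths is a bijection of the relevant residue classes. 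I would close by remarking that this recovers \cite[Thm. 10]{s19} when $i = m+1$ is replaced by the equivalent representative, modulo the slight difference in how that paper indexes the residue.
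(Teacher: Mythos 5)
Your proposal is correct and follows essentially the same route as the paper: empty squares of each $c$-block become 1's, the spot and tail become 0's, a trailing 0 is deleted, and the inverse re-appends a 0 and parses alternating runs. The extra care you take with run boundaries and the trailing 0 only makes explicit what the paper's proof states more briefly.
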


\begin{proof}
Given a color composition of $n$ with all colors congruent to $i \bmod m$, 
\begin{itemize}
\item[1a.] In each part, map the empty squares of the $c$-block (i.e., the squares before the spot) to 1.
\item[1b.] In each part, map the spotted square and tail to 0.
\item[2.] Remove a 0 at the end.
\end{itemize}
Since allowed colors satisfy $c \ge 2$, the first binary digit is 1.  Because $c \equiv i \bmod m$ and each run of 1's begins with a corresponding part, the length of each run of 1's is congruent to $i-1 \mod m$.  And since the tiling for a color composition ends in a spot or empty square after a spot, the last binary digit is 0 (before it is removed in step 2).

The reverse map is clear: Given a specified binary string, add a 0 at the end and convert alternating runs of 1's and 0's into tilings of a color composition.
\end{proof}

See Figure \ref{fig:bij5} for an example with even colors.  We note that the proposition statement holds for $i=1$, but neither the combinatorial proof here nor the related one in \cite{s19} cover that case.

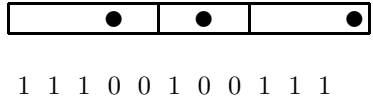
\begin{figure}[h]
\setlength{\unitlength}{.4cm}
\begin{picture}(12,4)%(-11,0.5)
\thicklines

\put(0,3){\line(1,0){12}}
\put(0,4){\line(1,0){12}}
\put(0,3){\line(0,1){1}}
\put(5,3){\line(0,1){1}}
\put(3.5,3.5){\circle*{.5}}
\put(8,4){\line(0,-1){1}}
\put(12,4){\line(0,-1){1}}
\put(6.5,3.5){\circle*{.5}}
\put(11.5,3.5){\circle*{.5}}

\put(0.3,1){1}
\put(1.3,1){1}
\put(2.3,1){1}
\put(3.3,1){0}
\put(4.3,1){0}

\put(5.3,1){1}
\put(6.3,1){0}
\put(7.3,1){0}

\put(8.3,1){1}
\put(9.3,1){1}
\put(10.3,1){1}

\end{picture}
\caption{The bijection of Proposition \ref{binseq} for $m = i = 2$ where the composition $(5_4, 3_2, 4_4)$ of 12 corresponds to the length 11 binary string $11100\;100\;111$ (spaces corresponding to parts are only for clarity).}
\label{fig:bij5}
\end{figure}

\subsubsection{Connecting compositions with odd colors and with no parts 1}
The first new restriction considered in \cite{sa} is $n$-color compositions with just odd colors \cite[A006054]{oeis}.  Sachdeva and Agarwal also consider a restriction due to Guo \cite{g12}, $n$-color compositions where the part $1_1$ is prohibited \cite[A219788]{oeis}.  We conclude with a new connection between these two types of $n$-color compositions.

\begin{proposition} \label{oddno1}
Let $a(n)$ be the number of $n$-color compositions of $n$ with only odd colors allowed and let $b(n)$ be the number of $n$-color compositions of $n$ where the part $1_1$ is prohibited.  Then $a(n) = b(n) + b(n-1)$.
\end{proposition}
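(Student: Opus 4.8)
The plan is to establish the identity by a weight-respecting bijection
\[
\{\text{odd-color compositions of } n\}\ \longleftrightarrow\ \{\text{compositions of } n \text{ with no part } 1_1\}\ \sqcup\ \{\text{compositions of } n-1 \text{ with no part } 1_1\},
\]
from which $a(n)=b(n)+b(n-1)$ is immediate. Everything rests on a correspondence for a single tile. A tile of length at least $2$ carries its spot in an odd or an even square. Those with an odd spot are exactly the odd-color parts $\kappa_c$ with $\kappa\ge 2$; those with an even spot $c$ become, upon deleting their (empty) first square, the odd-color part $(\kappa-1)_{c-1}$. Reading this in reverse, prepending an empty square to an odd-color part produces an arbitrary length-$\ge 2$ tile with an even spot. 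So every odd-color part other than $1_1$ is either already a no-$1_1$ part or becomes one after a leading square is prepended, and $1_1$ is the sole odd-color part not accounted for this way. This is the combinatorial content of the generating-function relation $A(x)=(1+x)B(x)$ for the two sequences, but I would keep the argument on the level of spotted tilings.

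I would first define the map on the right-hand side. Given a no-$1_1$ composition $D$ (of weight $n$ or of weight $n-1$), replace each even-colored part $\kappa_c$ of $D$ by the pair of parts $1_1$ and $(\kappa-1)_{c-1}$, leaving odd-colored parts untouched; this yields an odd-color composition of the same weight. If the weight was $n$, this is the output; if it was $n-1$, append one more part $1_1$ to obtain an odd-color composition of $n$. The inverse map takes an odd-color composition $C$ of $n$ and greedily undoes this: scanning from the left, each time a part $1_1$ is encountered with a part to its right, merge the $1_1$ into that part (prepend its square, pushing the spot one square to the right and hence flipping its color from odd to even); if a part $1_1$ is encountered with nothing to its right, delete that $1_1$ (it is the ``orphan'') and remember that the preimage came from the weight-$(n-1)$ side. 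Deleting the orphan or not, the surviving composition has all parts of length at least $2$, so it is a no-$1_1$ composition.

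What remains is to check that these maps are mutually inverse and weight-consistent; most of this is routine bookkeeping — expanding even parts and merging $1_1$'s visibly undo one another on the interior, and the optional final $1_1$ toggles between the $n$ and $n-1$ cases. The delicate point, which I expect to be the crux, is the well-definedness of the greedy scan: one must observe that a merge always outputs a part of length at least $2$ (so it never manufactures a new $1_1$), which forces the scan to fail, if at all, only at the very last part and hence to produce at most one orphan; and dually that applying the scan to the image of $D$ returns precisely $D$ together with its original weight. Granting this, injectivity and surjectivity follow directly from the construction, and comparing cardinalities gives $a(n)=b(n)+b(n-1)$. I would close with a worked instance, e.g.\ tracing the eleven odd-color compositions of $4$ onto the eight no-$1_1$ compositions of $4$ and the three of $3$.
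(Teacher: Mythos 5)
Your proposal is correct and is essentially the paper's own bijection: expand each even-colored part $\kappa_{2c}$ into $1_1,(\kappa-1)_{2c-1}$ (appending a trailing $1_1$ for the weight-$(n-1)$ copy), and invert by merging each non-terminal $1_1$ into its right neighbor and deleting a terminal $1_1$. Your observation that a merge never creates a new $1_1$, so at most one terminal orphan can occur, is exactly the point that makes the inverse well defined, matching the paper's argument.
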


\begin{proof}
Write $A(n)$ for the set of color compositions of $n$ with only odd colors allowed and $B(n)$ for the set of color compositions of $n$ where the part $1_1$ is prohibited.  We establish a bijection between $A(n)$ and  $B(n) \cup B(n-1)$.

Starting from $B(n) \cup B(n-1)$,
\begin{enumerate}
\item Create the set $B'(n)$ by adding a $1_1$ at the end of each composition in $B(n-1)$.
\item In the compositions of $B(n) \cup B'(n)$, change each even part $\kappa_{2c}$ to $1_1, (\kappa-1)_{2c-1}$.
\end{enumerate}
Note that $B'(n)$ from step 1 is disjoint from $B(n)$ which contains no parts $1_1$.  An alternative description of step 2 is to make the first square (empty since the color is even) spotted corresponding to a new part; note that the spot for $\kappa_{2c}$ stays in the same position.  See Figure \ref{fig:odd} for an example.  Clearly, step 2 results in compositions with only odd colors.

For the reverse map, given a composition in $A(n)$,
\begin{enumerate}
\item Working left to right, any $1_1$ having a successor $\kappa_{2c-1}$ becomes the part $(\kappa+1)_{2c}$.
\item Any terminal $1_1$ is removed.
\end{enumerate}
Note that the spot indicating the color of $\kappa_{2c-1}$ does not move in the merger giving $(\kappa+1)_{2c}$.  Clearly, the procedure leaves no parts $1_1$.  The compositions with a $1_1$ at the end become elements of $B(n-1)$; those with other terminal parts become elements of $B(n)$.
\end{proof}

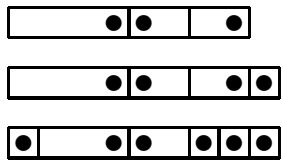
\begin{figure}[h]
\setlength{\unitlength}{.4cm}
\begin{picture}(12,6)%(-11,0.5)
\thicklines

\put(0,5){\line(1,0){8}}
\put(0,6){\line(1,0){8}}
\put(3.5,5.5){\circle*{.5}}
\put(4.5,5.5){\circle*{.5}}
\put(7.5,5.5){\circle*{.5}}
\put(0,5){\line(0,1){1}}
\put(4,5){\line(0,1){1}}
\put(6,5){\line(0,1){1}}
\put(8,5){\line(0,1){1}}

\put(0,3){\line(1,0){9}}
\put(0,4){\line(1,0){9}}
\put(3.5,3.5){\circle*{.5}}
\put(4.5,3.5){\circle*{.5}}
\put(7.5,3.5){\circle*{.5}}
\put(8.5,3.5){\circle*{.5}}
\put(0,3){\line(0,1){1}}
\put(4,3){\line(0,1){1}}
\put(6,3){\line(0,1){1}}
\put(8,3){\line(0,1){1}}
\put(9,3){\line(0,1){1}}

\put(0,1){\line(1,0){9}}
\put(0,2){\line(1,0){9}}
\put(0.5,1.5){\circle*{.5}}
\put(3.5,1.5){\circle*{.5}}
\put(4.5,1.5){\circle*{.5}}
\put(6.5,1.5){\circle*{.5}}
\put(7.5,1.5){\circle*{.5}}
\put(8.5,1.5){\circle*{.5}}
\put(0,1){\line(0,1){1}}
\put(1,1){\line(0,1){1}}
\put(4,1){\line(0,1){1}}
\put(6,1){\line(0,1){1}}
\put(7,1){\line(0,1){1}}
\put(8,1){\line(0,1){1}}
\put(9,1){\line(0,1){1}}

\end{picture}
\caption{The bijection of Proposition \ref{oddno1} for $n=9$ where $(4_4,2_1,2_2) \in B(8)$ corresponds to $(1_1,3_3,2_1,1_1,1_1,1_1) \in A(9)$.}\label{fig:odd}
\end{figure}

\end{document}